\documentclass{amsart}
\usepackage{amssymb,amsmath,amsthm,amsfonts,color}
\usepackage{mathrsfs,dsfont}
\usepackage{graphicx,graphics}
\usepackage{esint}
\usepackage[applemac]{inputenc}
\usepackage{yhmath}

\numberwithin{equation}{section}
\theoremstyle{plain}
\newtheorem{theorem}{Theorem}[section]

\newtheorem{lemma}[theorem]{Lemma}

\newtheorem{corollary}[theorem]{Corollary}

\theoremstyle{definition}

\newtheorem{remark}[theorem]{Remark}

\newcommand\h{\mathscr H^{1}}

\newcommand\ds\displaystyle

\newcommand\ph{\varphi}
\newcommand\R{\mathbb R}

\newcommand\T{\mathscr T_2}

\renewcommand\L{\mathcal L}
\newcommand\D{\mathscr D}

\newcommand\Y{{Y_2}}

\newcommand\Lr{\mathscr L}
\newcommand\LL{\mathbb L}
\newcommand\KK{\mathbb K}
\newcommand\Ks{\mathscr K}
\newcommand\Z{\mathbb Z}
\newcommand\ZZ{\mathscr Z}
\newcommand\esym{{\mathbf E}}

\renewcommand\ell{p}

\newcommand\bl{{\bar\lambda}}
\newcommand\bm{{\bar\mu}}

\newcommand\essinf{\mathop{\operatorname{ess-inf}}}

\renewcommand\div{\mbox{\rm div\,}}
\newcommand\tr{\mbox{\rm tr\,}}

\newcommand{\Om}{\Omega}
\newcommand\MS{\R^{2\times 2}_s}

\newcommand\iO{\int_\Omega}
\newcommand\iY{\int_{\Y}}

\newcommand\iYt{\int_{Y_2}}
\newcommand\iR{\int_{\R^2}}
\newcommand\itwo[1]{\int_{\Om\times Z_{#1}}}
\newcommand\itwot{\int_{\Om\times \Y}}

\newcommand\e{\varepsilon}
\newcommand\ep{\varepsilon}
\newcommand\ue{u^\e}
\newcommand{\LLper}{\Lambda_{\mbox{\rm\tiny per}}}

\newcommand\avse{\alpha_{\mbox{\rm\tiny vse}}}
\newcommand\ase{\alpha_{\mbox{\rm\tiny se}}}

\newcommand\mn{\mathbb{R}^{2\times 2}}
\newcommand\mtwo{\mathbb{R}^{2\times 2}}

\newcommand\Csc{C^\infty_c(\Om;\R^2)}
\newcommand\Cinf{C^\infty_c(\Om\times \T;\mn)}

\newcommand\OR{\Om;\R^2}
\newcommand\ORR{\Om;\mn}
\newcommand\OYR{\Om\times \T;\R^2}
\newcommand\OYRR{\Om\times \Y;\mn}
\newcommand\YR{\Y;\R^2}
\newcommand\Omb{\bar\Omega}

\newcommand\XX{{\mathbf X}}

\newcommand\wk{\rightharpoonup}
\newcommand\wkst{\stackrel{\star}{\rightharpoonup}}
\newcommand\wkg{\stackrel{\Gamma(L^2)}{\rightharpoonup}}

\newcommand\wkgh{\stackrel{\Gamma(H^1_0)}{\rightharpoonup}}

\newcommand\be[1]{\begin{equation}\label{#1}}
\newcommand\ee{\end{equation}}
\newcommand\ba[1]{\left\{\begin{array}{#1}}
\newcommand\ea{\end{array}\right.}
\newcommand\twosc{\begin{array}{c}\rightharpoonup\\[-.3cm]\rightharpoonup\end{array}}

\newcommand\la{\lambda}
\renewcommand\sharp{{\rm per}}

\title [2D aether  and homogenization]{A two-dimensional labile aether through homogenization}
\author[M. Briane] {Marc Briane} 
\address[Marc Briane]{Univ Rennes, INSA Rennes,  CNRS, IRMAR - UMR 6625, F-35000 Rennes, France}
\email[M. Briane]{mbriane@insa-rennes.fr}
\author[G. Francfort] {Gilles A. Francfort} 
\address[Gilles Francfort]{LAGA, Universit\'e Paris-Nord \& Courant Institute of Mathematical Sciences, New York University }
\email[G. Francfort]{gilles.francfort@univ-paris13.fr}
\date{\today}

\begin{document}
\vskip .2truecm
\begin{abstract}
\small{Homogenization in linear elliptic problems usually assumes  coercivity of the accompanying Dirichlet form. In linear elasticity, coercivity is not ensured through mere (strong) ellipticity so that the usual estimates that render homogenization meaningful break down unless stronger assumptions, like very strong ellipticity, are put into place. Here, we demonstrate that a $L^2$-type homogenization process can still be performed, very strong ellipticity notwithstanding, for a   specific two-phase two dimensional problem whose significance derives from prior work establishing that  one can lose strong ellipticity in such a setting, {\it provided} that homogenization turns out to be meaningful.

A striking consequence is that, in an elasto-dynamic setting, some two-phase homogenized laminate may support plane wave propagation in the direction of lamination on a bounded domain with Dirichlet boundary conditions, a possibility which does not exist for  the associated two-phase microstructure at a fixed scale. Also, that material blocks longitudinal waves in the direction of lamination, thereby acting as a two-dimensional aether in the sense of {\em e.g.} Cauchy.
}
\end{abstract}
\keywords{Linear elasticity, ellipticity, $\Gamma$-convergence, homogenization, lamination, {wave propagation}}
 
\maketitle
{\small\bf Mathematics Subject Classification:} 35B27, 74B05, 74J15, 74Q15

\section{Introduction}

This paper may be viewed as a sequel to both \cite{briane.francfort2015} and  \cite{francfort.gloria}. Those, in turn,  were a two-dimensional revisiting of \cite{geymonat.muller.triantafyllidis93} in the light of \cite{gutierrez99}.
The issue at stake was whether one could lose strict strong ellipticity when performing a homogenization process on a periodic mixture of two isotropic elastic materials, one being (strictly) very strongly elliptic while the other is only (strictly) strongly elliptic. We start this introduction with a brief overview of the problem that had been addressed in those papers, restricting all considerations to the two-dimensional case.

 We  consider throughout an elasticity tensor (Hooke's law) of the form
 $$
\LL\in L^\infty\big(\T; \Lr_s(\MS)\big),
 $$
 where $\T$ is the $2$-dimensional torus $\R^2/\mathbb Z^2$ and $ \Lr_s(\MS)$ denotes the set of symmetric mappings from the set of $2\!\times\!2$ symmetric matrices onto itself. Note that there is a canonical identification $\mathcal I$  between $\T$ and the unit cell  $\Y:=[0,1)^2$; for simplicity, we will denote by $y$ both an element of $\T$ and its image under the mapping $\mathcal I$.  
 
 The tensor-valued function $\LL$ defined in $\T$ is  extended by $\Y$-periodicity to $\R^2$ as 
\[
\LL(y+\kappa)=\LL(y),\quad\mbox{a.e. in }\R^2,\ \forall\,\kappa\in\Z^2,
\]
so that the rescaled function $\LL(x/\e)$ is $\e \Y$-periodic.
\par
We then consider the Dirichlet boundary value problem on a bounded open domain $\Omega\subset\R^2$
\be{eq.Dir}
\ba{cll}
-\,\div\big(\LL(x/\e)\nabla \ue\big) &=  & f\mbox{ in }\Om
\\[2mm]
\ue&= & 0\mbox{ on }\partial \Om,
\ea
\ee
with $f\in H^{-1}(\OR)$.
We could impose a very strong ellipticity condition on $\LL$, namely
\be{def.alp-ver-str-ell}
\avse(\LL):=\essinf_{y\in\T}\left(\min\big\{\LL(y)M\cdot M:M\in \MS, |M|=1\big\}\right)>0.
\ee
In such a setting, homogenization is straightforward; see {\em e.g.} the remarks in  \cite[Ch. 6, Sec. 11]{sanchez80}. 
\par
Instead, we will merely impose (strict) strong ellipticity, that is
\be{def.alp-str-ell}
\ase(\LL):=\essinf_{y\in\T}\left(\min\big\{\LL(y)(a\otimes b)\cdot (a\otimes b):a,b\in \R^2, |a|=|b|=1\big\}\right)>0,
\ee
and this throughout.
\begin{remark}[Ellipticity and isotropy]
\label{rem.iso-str-ell}
Whenever $\LL$ is isotropic, that is
\[
\LL(y)M=\lambda(y)\,\tr(M)\,I_2+2\mu(y)\,M,\quad\mbox{for }y\in\T,\ M\in\R_s^{2\times 2},
\]
then \eqref{def.alp-ver-str-ell} reads as
$$
\essinf_{y\in\T}\left(\min\big\{\mu(y),\lambda(y)+\mu(y)\big\}\right)>0
$$
while \eqref{def.alp-str-ell} reads as
$$
\essinf_{y\in\T}\left(\min\big\{\mu(y),\lambda(y)+2\mu(y)\big\}\right)>0.
$$
\vskip-.8cm\hfill\P\end{remark}

The strong ellipticity condition \eqref{def.alp-str-ell} is the starting point of the study of homogenization performed in \cite{geymonat.muller.triantafyllidis93} from a variational standpoint, that of $\Gamma$-convergence. Under that condition, the authors investigate
the 
 $\Gamma$-convergence, for the weak topology of $H^1_0(\OR)$ on bounded sets (a metrizable topology), of the Dirichlet integral 
$$\iO \LL(x/\e)\nabla v\cdot\nabla v\,dx.$$
Then, under certain conditions that will be recalled in Section \ref{kr}, the $\Gamma$-limit is given through the expected homogenization formula 
\be{form-hom-cl}
\LL^0M\cdot M:= \min\left\{\iY \LL(y)(M+\nabla v)\cdot (M+\nabla v)\,dy:v\in H^1_{\rm per}(\YR)\right\}
\ee
 in spite of the lack of very strong ellipticity.
 
 In \cite{gutierrez99,gutierrez04}, the viewpoint is somewhat different. The author,  S. Guti\'errez, looks at a two-phase layering of a very strongly elliptic isotropic material with a strongly elliptic isotropic material. Assuming that the homogenization process makes sense, he shows that strict strong ellipticity can be lost through that process for a very specific combination of Lam\'e coefficients (see \eqref{hyp-lam-mu} below) and for a volume fraction $1/2$ of each phase. 
 
 Our goal in the previous study \cite{briane.francfort2015} was to reconcile those two sets of results, or more precisely, to demonstrate that {Guti\'errez}' viewpoint expounded in \cite{gutierrez99,gutierrez04} fit within the variational framework set forth in \cite{geymonat.muller.triantafyllidis93} and that the example produced in those papers is the only possible one within the class of laminate-like microstructures.  Then, it is shown in \cite{francfort.gloria} that the {Guti\'errez} pathology is in essence canonical, that is that inclusion-type microstructures never give rise to such a pathology.  
 
 The concatenation of those results may be seen as an indictment of linear elasticity, especially when confronted with its scalar analogue where ellipticity cannot be weakened through a homogenization process. However, our results, hence those of Guti\'errez, had to be tempered by the realization that $\Gamma$-convergence {\it a priori} assumes convergence of the relevant sequences in the {\it ad hoc} topology (here the weak-topology on bounded sets of $H^1_0$). The derivation of a bound that allows for such an assumption not to be vacuous is not part of the $\Gamma$-convergence  process, yet it is essential lest that process become a gratuitous mathematical exercise.
 
 This is the primary task that we propose to undertake in this study. To this end we add to the Dirichlet integral
 a zeroth-order term of the form $\int_\Omega |v|^2\,dx$ which will  immediately provide compactness in the weak topology of $L^2(\Omega;\R^2)$. We are then led to an investigation, for the {\it weak topology on bounded sets of}  $L^2(\Omega;\R^2)$, of the $\Gamma$-limit of the Dirichlet integral
 $$\iO \LL(x/\e)\nabla v\cdot\nabla v\,dx.$$
 
 {Our ``elliptic" results, detailed in Theorems \ref{thm.main}, \ref{thm.mainbis}, essentially state} that, at least for periodic mixtures of two isotropic materials that satisfy the constraints imposed in \cite{gutierrez99},  the ensuing $\Gamma$-limit is in essence identical to that which had been previously obtained  for the {\it weak topology on bounded sets of}  $H^1_0(\Omega;\R^2)$. An immediate consequence is that {Guti\'errez}' example does provide a {\it bona fide} loss of strict strong ellipticity in two-phase two-dimensional periodic homogenization, and not only one that would be conditioned upon some otherwordly bound  on minimizing sequences;  see Lemma~\ref{lem.ow}. 
 
We then move on to the hyperbolic setting and demonstrate that the results of Theorems \ref{thm.main}, \ref{thm.mainbis} imply  a weak homogenization result for the equations of elasto-dynamics which leads, in the Gutiérrez example, to the striking and, to the best of our knowledge, new realization that homogenization may lead to  a plane wave propagation for the homogenized system on a bounded domain with Dirichlet boundary conditions, although, at a fixed scale, the microstructure would of course prevent  such a  propagation, precisely because of  the Dirichlet boundary condition. This is roughly because a degeneracy of strong ellipticity in some direction relaxes the boundary condition on a certain part of the boundary. 

Further, the Gutiérrez material is unique in its anisotropy class (2D orthorombic) in blocking  longitudinal waves -- those  for which propagation and oscillation are in the same direction --  in a some preset direction.  This feature motivates the title of our contribution because such a property was precisely the focus  of  pre-Maxwellian investigations by, among others, Cauchy, Green, Thomson (Lord Kelvin). There, an elastic substance called {\em labile aether} was meant to carry light throughout space, thereby spatially co-existing with the various materials it permeated \cite[Chapter~5]{whittaker}.  In order to  conform to the various available observations   for the   propagation of light, it was deemed imperative that aether, as an elastic material, should allow for transverse plane waves while inhibiting longitudinal waves. According to \cite{whittaker},  Green's 1837 theory of wave reflection for elastic solids that  assumed, in Fresnel's footstep,  that
aether should be much stiffer in compression than in shear prompted Cauchy's 1839 publication of his  third theory of reflection in a material for which the Lamé coefficients $\lambda, \mu$ satisfy 
\be{zero.ws}
\la + 2\mu = 0.
\ee
This is precisely what the Gutiérrez material achieves, at least in a crystalline way, by forbidding longitudinal waves in the direction of lamination.

In Section \ref{kr}, we provide a quick review of the results that are relevant to our investigation. Then Section~\ref{sec.gam} details the precise assumptions under which we obtain {Theorems \ref{thm.main}, \ref{thm.mainbis} and present the proofs of those theorems.  Section~\ref{concl}  details the impact of our  results on the actual minimization of the above mentioned Dirichlet integrals augmented by a linear (force) term. Such a minimization process provides in turn a homogenization result for elasto-dynamics (Theorem \ref{thm.elastod}) in a setting where strict strong ellipticity is lost in the limit. We conclude with a  discussion of the propagation properties of the Gutiérrez  material.
 
Throughout the paper, the following remark will play a decisive role. Since, for $v\in H^1_0(\Omega;\R^2)$, the mapping  $v\mapsto\det\left(\nabla v\right)$ is a null Lagrangian, we are at liberty to replace the Dirichlet integral under investigation by
\[
 \iO \big\{\LL(x/\e)\nabla v\cdot\nabla v+c\det\left(\nabla v\right)\big\}\,dx,
\]
for any $c\in \R$, thereby replacing $$M\mapsto \LL(y)M, \; M\in \R^{2\times 2},$$ by $$M\mapsto \LL(y)M+\frac c2 \;{\rm cof}\left(M\right),\; M\in \R^{2\times 2}.$$

\par\bigskip\noindent
Notationwise,
\begin{itemize}
\item[-]  $I_2$ {is} the unit matrix of $\R^{2\times 2}$; $R^\perp$ is the $\pi/2$-rotation matrix 
$\left(\begin{smallmatrix}  0&-1\\1&\;\ 0\end{smallmatrix}\right)$;
\item[-]  $A\cdot B$ {is} the Frobenius inner product between two elements of $A,B\in\mn$, that is $A\cdot B:=\tr(A^TB)$;
\item[-] If $A:=\left(\begin{smallmatrix} a&\; c\\b&\; d\end{smallmatrix}\right)\in \mtwo,$ the cofactor matrix of $A$ is
${\rm cof}\left(A\right):=\left(\begin{smallmatrix} \;\ d&-b\\-c&\;\ a\end{smallmatrix}\right)$;
\item[-] If $\KK:\R^p\to\R^p$ is a linear mapping, the {\em pseudo-inverse} of $\KK$, denoted by~$\KK^{-1}$, is defined on its range ${\rm Im}(\KK)$ as follows:  for any $\xi\in\R^p$, $\KK^{-1}(\KK\,\xi)$ is the orthogonal projection of $\xi$ onto the orthogonal space $[{\rm Ker}\,(\KK)]^\perp$, so that $\KK\big(\KK^{-1}(\KK\,\xi)\big)=\KK\,\xi$;
\item[-] {If $u$ is a distribution (an element of $\D'(\R^2;\R^2)$), then $$\ds{\rm curl\ }u:=\frac{\partial u_1}{\partial x_2}-\frac{\partial u_2}{\partial x_1}$$ while $$\esym(u)=\left(\begin{smallmatrix} \frac{\partial u_1}{\partial x_1}&\frac12\left(\frac{\partial u_1}{\partial x_2}+\frac{\partial u_2}{\partial x_1}\right)\\\frac12\left(\frac{\partial u_1}{\partial x_2}+\frac{\partial u_2}{\partial x_1}\right)&\frac{\partial u_2}{\partial x_2}\end{smallmatrix}\right);$$}
\item[-] $H^1_\sharp(Y_2;\R^p)$ (resp. $L^2_\sharp(Y_2;\R^p), L^\infty_\sharp(Y_2;\R^p), C^p_\sharp(Y_2;\R^p)$) is the space of those functions in $H^1_{\rm loc}(\R^2;\R^p)$ (resp. $L^2_{\rm loc}(\R^2;\R^p), \!L^\infty(\R^2;\R^p), \!C^p(\R^2;\R^p)$) that are $Y_2$-periodic;
{\item[-] For any subset $\ZZ\in \T$, we agree to denote by $Z$ its representative in $\Y$ through the canonical representation $\mathcal I$ introduced earlier, and by $Z^\#$ its representative in $\R^2$, that is the open ``periodic" set $$Z^\#:=\mathring{\wideparen{\bigcup_{k\in \Z^2}(k+\overline{Z})}}.$$}
\item[-] Throughout, the variable $x$ will refer to a running point in a {\em bounded} open domain $\Omega\subset\R^2$, while the variable $y$ will refer to a running point in $\Y$ (or~$\T$, {or $k+\Y,\; k\in \mathbb Z^2$});
\item[-] {If ${\mathscr I}^\e$ is an $\e$-indexed sequence of functionals with
$${\mathscr I}^\e: X\to\R,$$
(X reflexive Banach space), we will write that ${\mathscr I}^\e\stackrel{\Gamma(X)}{\wk} {\mathscr I}^0$, with
$${\mathscr I}^0:X\to \R,$$
if ${\mathscr I}^\e \;\Gamma$-converges to ${\mathscr I}^0$ for the weak topology on bounded sets of $X$} (see {\em e.g.} \cite{dalmaso93} for the appropriate definition); and 
\item[-] $u^\e\twosc u^0$ where $u^\e\in L^2(\OR)$ and $u^0\in L^2(\OYR)$ iff $u^\e$ two-scale converges to $u^0$ in the sense of Nguetseng; see {\em e.g.} \cite{nguetseng89,allaire92}. \end{itemize}

\section{Known results}\label{kr}

As previously announced, this short section recalls the relevant results obtained in \cite{geymonat.muller.triantafyllidis93}, \cite{briane.pallares}.  For vector-valued (linear) problems,  a successful application of Lax-Milgram's lemma to a Dirichlet problem of the type \eqref{eq.Dir} hinges on the positivity of the following functional coercivity constant:
\[
\Lambda(\LL):= \inf\left\{\iR \LL(y)\nabla v\cdot\nabla v\,dy: v\in \Csc, \;\iR |\nabla v|^2\,dy=1\right\}.
\]
\par
As long as $\Lambda(\LL)>0$, existence and uniqueness of the solution to \eqref{eq.Dir} is guaranteed by Lax-Milgram's lemma.

\par
Further, according to classical results in the theory of homogenization, under condition \eqref{def.alp-ver-str-ell} the solution $\ue\in H^1_0(\OR)$ of \eqref{eq.Dir} satisfies
$$
\ba{cl}
\ue \wk u, & \mbox{weakly in }H^1_0(\OR)
\\[2mm]
\LL(x/\e)\nabla \ue \wk \LL^0\nabla u, & \mbox{weakly in } L^2(\ORR)\\[2mm]
{-\,\div\big(\LL^0\nabla u\big)=f},&
\ea
$$
with $\LL^0$ given by \eqref{form-hom-cl}. The same result holds true when  \eqref{def.alp-ver-str-ell} is replaced by the condition that $\Lambda(\LL)>0$; see \cite{francfort92}.

When $\Lambda(\LL)=0$, the situation is more intricate. A first result was obtained in \cite[Theorem 3.4(i)]{geymonat.muller.triantafyllidis93}, namely
\begin{theorem}
\label{thm.gmt1}
If $\Lambda(\LL)\ge 0$ and
\[
\LLper(\LL):=\inf\left\{\iY \LL(y)\nabla v\cdot\nabla v\ dy: v\in H^1_{\rm per}(\YR),\ \iY |\nabla v|^2 dy=1 \right\}>0,
\]
then,
${\mathscr I}^\e \wkgh {\mathscr I}^0,$ 
 with $\LL^0$ given by \eqref{form-hom-cl}.
\end{theorem}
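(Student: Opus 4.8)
The plan is to establish the $\Gamma$-convergence statement by separately verifying the $\Gamma$-$\liminf$ and $\Gamma$-$\limsup$ inequalities for the weak $H^1_0$ topology on bounded sets, using a standard two-scale compactness argument together with the periodicity hypothesis $\LLper(\LL)>0$ to control oscillating correctors. First I would recall the functional setting: write $\mathscr I^\e(v):=\iO\LL(x/\e)\nabla v\cdot\nabla v\,dx$ for $v\in H^1_0(\OR)$, extended by $+\infty$ off that space, and note that along a sequence $v^\e\wk v$ weakly in $H^1_0$ with $\sup_\e\mathscr I^\e(v^\e)<\infty$, the gradients $\nabla v^\e$ are bounded in $L^2$, so up to a subsequence $\nabla v^\e\twosc M(x)+\nabla_y w(x,y)$ for some $w\in L^2(\Om;H^1_\sharp(\Y;\R^2))$, where $M=\nabla v$. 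The key technical point enabling this, despite the absence of very strong ellipticity, is the null-Lagrangian trick emphasized in the introduction: adding $c\det(\nabla v)$ changes neither $\mathscr I^\e$ on $H^1_0$ nor the candidate $\Gamma$-limit, and for a suitable choice of $c$ the modified integrand becomes genuinely coercive on gradients of the relevant form, so that the quadratic forms involved are bounded below and the usual weak lower semicontinuity machinery applies to the two-scale limit.

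For the $\Gamma$-$\liminf$ inequality, I would use weak lower semicontinuity of the (shifted, hence bounded-below) quadratic two-scale functional:
\[
\liminf_\e \mathscr I^\e(v^\e)\;\ge\;\itwot \LL(y)\big(\nabla v(x)+\nabla_y w(x,y)\big)\cdot\big(\nabla v(x)+\nabla_y w(x,y)\big)\,dx\,dy\;\ge\;\iO \LL^0\nabla v\cdot\nabla v\,dx,
\]
the last step being minimization over the corrector $w(x,\cdot)\in H^1_\sharp(\Y;\R^2)$ at fixed $x$, which is exactly formula \eqref{form-hom-cl}. Here the hypothesis $\LLper(\LL)>0$ is what guarantees that the cell problem defining $\LL^0$ is well posed (the infimum in \eqref{form-hom-cl} is attained and finite), and that $\LL^0$ is itself strongly elliptic; the hypothesis $\Lambda(\LL)\ge 0$ ensures the shifted form stays nonnegative so the two-scale lower semicontinuity is legitimate.

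For the $\Gamma$-$\limsup$ inequality I would exhibit a recovery sequence: given $v\in H^1_0(\OR)$, first reduce to $v\in C^\infty_c$ by density and the continuity of $\mathscr I^0$, then set $v^\e(x):=v(x)+\e\, w_\delta(x,x/\e)$, where $w_\delta$ is a smooth, compactly-supported-in-$x$ approximation of the optimal corrector $x\mapsto w(x,\cdot)$ solving the cell problem; one checks $v^\e\in H^1_0$, $v^\e\to v$ strongly in $L^2$ (hence weakly in $H^1_0$ after the gradient bound), and $\mathscr I^\e(v^\e)\to \iO\LL^0\nabla v\cdot\nabla v\,dx$ by the usual Riemann–Lebesgue argument for periodically oscillating integrands, followed by a diagonalization in $\delta$. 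The main obstacle — and the place where this proof genuinely departs from the classical coercive case — is the compactness/lower-semicontinuity step: one must be sure that a sequence with bounded energy $\mathscr I^\e(v^\e)$ and bounded $H^1_0$ norm really does two-scale converge to an admissible pair with the right structure, and that no energy is lost in the two-scale limit, even though $\LL(y)$ is only strongly elliptic pointwise. This is precisely what the combination of the null-Lagrangian shift, the sign condition $\Lambda(\LL)\ge 0$, and the strict periodic coercivity $\LLper(\LL)>0$ is designed to rescue, and I expect the bulk of the work to lie in making that rescue rigorous.
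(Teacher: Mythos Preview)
The paper does not give its own proof of this statement: Theorem~\ref{thm.gmt1} is quoted as a known result from \cite[Theorem~3.4(i)]{geymonat.muller.triantafyllidis93} and is used as a black box thereafter. So there is no ``paper's proof'' to compare against; the relevant question is whether your sketch would independently establish the statement.

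There is a genuine gap in your argument at the $\Gamma$-$\liminf$ step. You rely on the null-Lagrangian shift --- adding $c\det(\nabla v)$ so that the integrand becomes pointwise nonnegative --- and claim that ``for a suitable choice of $c$ the modified integrand becomes genuinely coercive''. This is exactly the device the paper exploits in Section~\ref{sec.gam}, but it is specific to the particular two-phase isotropic mixture \eqref{hyp-lam-mu}, where the single value $c=4\mu_1$ happens to work for \emph{both} phases (see \eqref{Kj}). Under the bare hypotheses of Theorem~\ref{thm.gmt1} (periodic, possibly anisotropic $\LL(y)$ with $\Lambda(\LL)\ge0$ and $\LLper(\LL)>0$), while each fixed $\LL(y)$ is rank-one convex and hence in 2D admits \emph{some} constant $c(y)$ with $\LL(y)M\!\cdot\!M+c(y)\det M\ge0$, there is no reason a single $y$-independent $c$ should exist. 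If $c$ depends on $y$, then $c(y)\det(\nabla v)$ is no longer a null Lagrangian and the whole shift collapses. Consequently your two-scale lower semicontinuity step, which needs a pointwise-nonnegative quadratic form, is unjustified in this generality.

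The argument in \cite{geymonat.muller.triantafyllidis93} uses the hypothesis $\Lambda(\LL)\ge0$ at the \emph{functional} level rather than pointwise: for any $\phi\in H^1_0(\Om;\R^2)$ extended by zero one has $\int_{\R^2}\LL(x/\e)\nabla\phi\cdot\nabla\phi\,dx\ge0$. One takes $\phi^\e:=v^\e-v-\eta_\delta\,\e\,w^{\nabla v}(\cdot/\e)$, where $w^M$ is the periodic corrector (well posed precisely because $\LLper(\LL)>0$) and $\eta_\delta$ is a cutoff; expanding the nonnegative quantity $\int\LL(x/\e)\nabla\phi^\e\!\cdot\!\nabla\phi^\e$, passing to the limit in the cross terms, and then letting $\delta\to0$ yields the $\liminf$ inequality. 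Your $\Gamma$-$\limsup$ construction via smooth recovery sequences and diagonalization is standard and correct.
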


This was very recently improved by {\sc A. Braides \& M. Briane} as reported in \cite[Theorem 2.4]{briane.pallares}. The result is as follows:

\begin{theorem}
\label{thm.bpm}
If $\Lambda(\LL)\ge 0$, then, ${\mathscr I}^\e \wkgh {\mathscr I}^0,$ with $\LL^0$ given
\be{eq.hom-for}
\LL^0M\cdot M:= \inf\left\{\iY \LL(y)(M+\nabla v)\cdot (M+\nabla v)\,dy:v\in H^1_{\rm per}(\YR)\right\}.
\ee
\end{theorem}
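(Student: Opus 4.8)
The plan is to establish the two $\Gamma$-inequalities for $\mathscr I^\e(v):=\iO\LL(x/\e)\nabla v\cdot\nabla v\,dx$ on $H^1_0(\OR)$ by \emph{regularizing} $\LL$ so as to fall within the scope of Theorem~\ref{thm.gmt1}, then letting the regularization vanish. Since $\Lambda(\LL(\cdot/\e))=\Lambda(\LL)\ge0$ by scaling, $\mathscr I^\e\ge0$ on $H^1_0(\OR)$; as will be seen below, $\LL^0$ given by \eqref{eq.hom-for} is the decreasing limit of positive semi-definite tensors, hence positive semi-definite, so $\mathscr I^0$ is a legitimate (weakly lower semicontinuous) candidate for the $\Gamma$-limit. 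For $\eta>0$ set $\LL_\eta:=\LL+\eta\,\id$, where $\id$ is the identity of $\MS$, so that $\mathscr I^\e_\eta(v):=\mathscr I^\e(v)+\eta\iO|\esym(v)|^2\,dx$ is the associated functional. Korn's inequality, on $\R^2$ and on the torus $\T$, gives $\Lambda(\LL_\eta)\ge\Lambda(\LL)+\eta/2>0$ and $\LLper(\LL_\eta)\ge\LLper(\LL)+c_K\,\eta>0$, so Theorem~\ref{thm.gmt1} applies to $\LL_\eta$ and yields $\mathscr I^\e_\eta\wkgh\mathscr I^0_\eta$, with $\LL^0_\eta$ given by the \emph{attained} minimum \eqref{form-hom-cl}. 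Comparing the variational formulae one checks that $\eta\mapsto\LL^0_\eta M\cdot M$ is non-decreasing, that $\LL^0_\eta\ge0$, and that $\LL^0_\eta M\cdot M\downarrow\LL^0 M\cdot M$ as $\eta\downarrow0$, uniformly for $M$ in bounded subsets of $\mn$ since the $\LL^0_\eta$ are uniformly bounded quadratic forms.

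With this in hand the $\Gamma$-$\liminf$ inequality comes for free. If $v^\e\wk v$ weakly in $H^1_0(\OR)$, then $R:=\sup_\e\iO|\nabla v^\e|^2\,dx<\infty$ and $\mathscr I^\e(v^\e)\ge\mathscr I^\e_\eta(v^\e)-\eta R$ for every $\eta>0$; by the $\Gamma$-$\liminf$ inequality for $\mathscr I^\e_\eta$ and $\LL^0_\eta\ge\LL^0$ this gives $\liminf_\e\mathscr I^\e(v^\e)\ge\mathscr I^0_\eta(v)-\eta R\ge\mathscr I^0(v)-\eta R$, and letting $\eta\downarrow0$ concludes. In particular, the usual obstruction to lower semicontinuity — the non-convexity of $M\mapsto\LL(y)M\cdot M$ — has been side-stepped entirely.

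The $\Gamma$-$\limsup$ inequality is where the genuine difficulty sits. For $v$ piecewise affine with $\nabla v=M_i$ on the $i$-th piece, and $\delta>0$, I would choose $w_i^\delta\in H^1_{\rm per}(\YR)$ with $\iY\LL(y)(M_i+\nabla w_i^\delta)\cdot(M_i+\nabla w_i^\delta)\,dy\le\LL^0M_i\cdot M_i+\delta$, glue the oscillating fields $\e\,w_i^\delta(x/\e)$ with a partition of unity, and cut off near $\partial\Om$; the trivial bound $\LL(y)A\cdot A\le\|\LL\|_{L^\infty}|A|^2$ then controls the interface and boundary layers, whose measure is let to shrink \emph{after} $\e\to0$, producing $v^\e_\delta\to v$ in $L^2$, $v^\e_\delta\wk v$ in $H^1_0(\OR)$, $\iO|\nabla v^\e_\delta|^2\,dx\le C(\delta)$, and $\limsup_\e\mathscr I^\e(v^\e_\delta)\le\mathscr I^0(v)+C|\Om|\,\delta$. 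To upgrade this to a \emph{bona fide} recovery sequence one must let $\delta\downarrow0$ while remaining bounded in $H^1_0(\OR)$, i.e. one needs the almost-minimizers $w_i^\delta$ to be choosable with $\iY|\nabla w_i^\delta|^2\,dy$ bounded independently of $\delta$ — equivalently, the infimum in \eqref{eq.hom-for} to be attained modulo the kernel of the form $w\mapsto\iY\LL(y)\nabla w\cdot\nabla w\,dy$, a kernel that costs no energy since $\Lambda(\LL)\ge0$ (together with $\LL^0M\cdot M>-\infty$) forces $\iY\LL(y)M\cdot\nabla w_0\,dy=0$ whenever $\iY\LL(y)\nabla w_0\cdot\nabla w_0\,dy=0$, so the blowing-up component of a minimizing sequence can be amputated. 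The lever here is the null-Lagrangian freedom underlined in the Introduction, namely replacing $\LL(y)M$ by $\LL(y)M+\frac c2\,{\rm cof}(M)$ — which alters neither $\mathscr I^\e$ on $H^1_0(\OR)$ nor $\LL^0$ except by the harmless summand $c\det M$: in favourable cases one picks $c$ with $\avse\big(\LL+\frac c2\,{\rm cof}\big)>0$ and reads off the conclusion from the classical very strongly elliptic theory applied to the shifted tensor, while in general the ``bounded modulo kernel'' claim has to be proved by hand. Once bounded recovery sequences are available for piecewise affine $v$, a routine density and diagonalization argument — using that $\mathscr I^0$ is continuous for the strong topology of $H^1_0(\OR)$, since $0\le\LL^0M\cdot M\le C|M|^2$; see \cite{dalmaso93} — extends the $\limsup$ inequality to all of $H^1_0(\OR)$.

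In summary, I expect the only serious obstacle to be this $\Gamma$-$\limsup$ step, precisely the production of recovery sequences that are \emph{bounded} in $H^1_0(\OR)$, which boils down to controlling the correctors in the degenerate regime $\LLper(\LL)=0$ — exactly the configuration that Theorem~\ref{thm.bpm} adds to the reach of Theorem~\ref{thm.gmt1}. Everything else, and the $\Gamma$-$\liminf$ inequality in particular, follows cheaply from the monotone regularization $\LL_\eta$ and the bound $\LL^0_\eta\ge\LL^0$.
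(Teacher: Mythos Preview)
The paper does not prove Theorem~\ref{thm.bpm}; it is quoted as \cite[Theorem~2.4]{briane.pallares}, an improvement of Theorem~\ref{thm.gmt1} due to Braides and Briane. There is therefore no in-paper proof to compare your proposal against, and what follows is an assessment of your argument on its own merits.

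Your $\Gamma\mbox{-}\liminf$ via the monotone regularization $\LL_\eta=\LL+\eta\,\id$ is clean and correct. One inaccuracy: the claim that $\LL^0_\eta$, hence $\LL^0$, is positive semi-definite on all of $\mn$ is false in general --- already in the Guti\'errez setting of \eqref{eq:values} one computes $\LL^0 I_2\cdot I_2=(\bl+2\bm_1)-4\mu_1$, which is negative for suitable moduli. What $\Lambda(\LL_\eta)>0$ buys is only rank-one positivity of $\LL^0_\eta$, and that is all you need for weak lower semicontinuity of $\mathscr I^0$ on $H^1_0(\OR)$; so the slip is harmless, but the justification should be rewritten.

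The genuine gap is the $\Gamma\mbox{-}\limsup$, and you locate it correctly: recovery sequences must be \emph{bounded} in $H^1_0(\OR)$, hence the correctors $w^\delta$ must be choosable with $\iY|\nabla w^\delta|^2\,dy$ bounded as $\delta\downarrow0$. Your kernel-amputation idea is incomplete on two counts. First, the orthogonality $\iY\LL(y)M\cdot\nabla w_0\,dy=0$ for $w_0$ in the null space of $w\mapsto\iY\LL\nabla w\cdot\nabla w\,dy$ hinges on $\LL^0 M\cdot M>-\infty$, which you assume but do not prove (it does not follow from your PSD claim, which is false). Second, and more seriously, even if kernel elements can be excised at no energetic cost, boundedness of the remaining component requires coercivity of the form $\iY\LL\nabla w\cdot\nabla w\,dy\ge c\iY|\nabla w|^2\,dy$ on the orthogonal complement $K^\perp$; nothing in $\LLper(\LL)\ge0$ prevents the spectrum from accumulating at $0$, so this is a real obstruction, not a formality. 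The null-Lagrangian shift you invoke is exactly the device the present paper exploits in its specific two-phase isotropic setting (see~\eqref{Kj}), but it does not obviously salvage the general statement of Theorem~\ref{thm.bpm}. In short, your sketch isolates the right difficulty --- bounded recovery sequences when $\LLper(\LL)=0$ --- but does not overcome it.
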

Note that dropping the restriction that $\LLper(\LL)$ (which is always above $\Lambda(\LL)$) be positive changes the minimum in \eqref{form-hom-cl} into an infimum in \eqref{eq.hom-for}.

\medskip

 As announced in the introduction, we are only interested in the kind of  two-phase mixture that can lead, in the layering case,  to the degeneracy first observed in \cite{gutierrez99}.
 Specifically, we assume  the existence of $2$ isotropic phases $\ZZ_1,\ZZ_2$ of $\T$ -- and of the associated subsets $Z_1$ and $Z_2$ of $\Y$, {or still $Z_1^\#$ and $Z_2^\#$ of $\R^2$ (see notation)} --
 such that 
 \be{hyp-ph}\begin{cases}
 \ZZ_1,\ZZ_2 \mbox{ are open, } C^2 \mbox{ subsets of }\T;
 \\[2mm]
\ZZ_1\cap \ZZ_2=\mbox{\O}\quad\mbox{and}\quad \bar \ZZ_1\cup\bar \ZZ_2=\T;
\\[2mm]
\ds \mbox{ $Z^\#_2$ has an unbounded component in $\R^2$, denoted by $X_2^\#$, and $X_2^\#\cap \Y=Z_2$};
\\[2mm]
\ZZ_1 \mbox{ has a finite number of connected components in }\T.
\end{cases}
\ee
We denote henceforth by $\theta\in (0,1)$ the volume fraction of $\ZZ_1$ in $\T$.
\begin{figure}[ht]
\includegraphics[scale=.25]{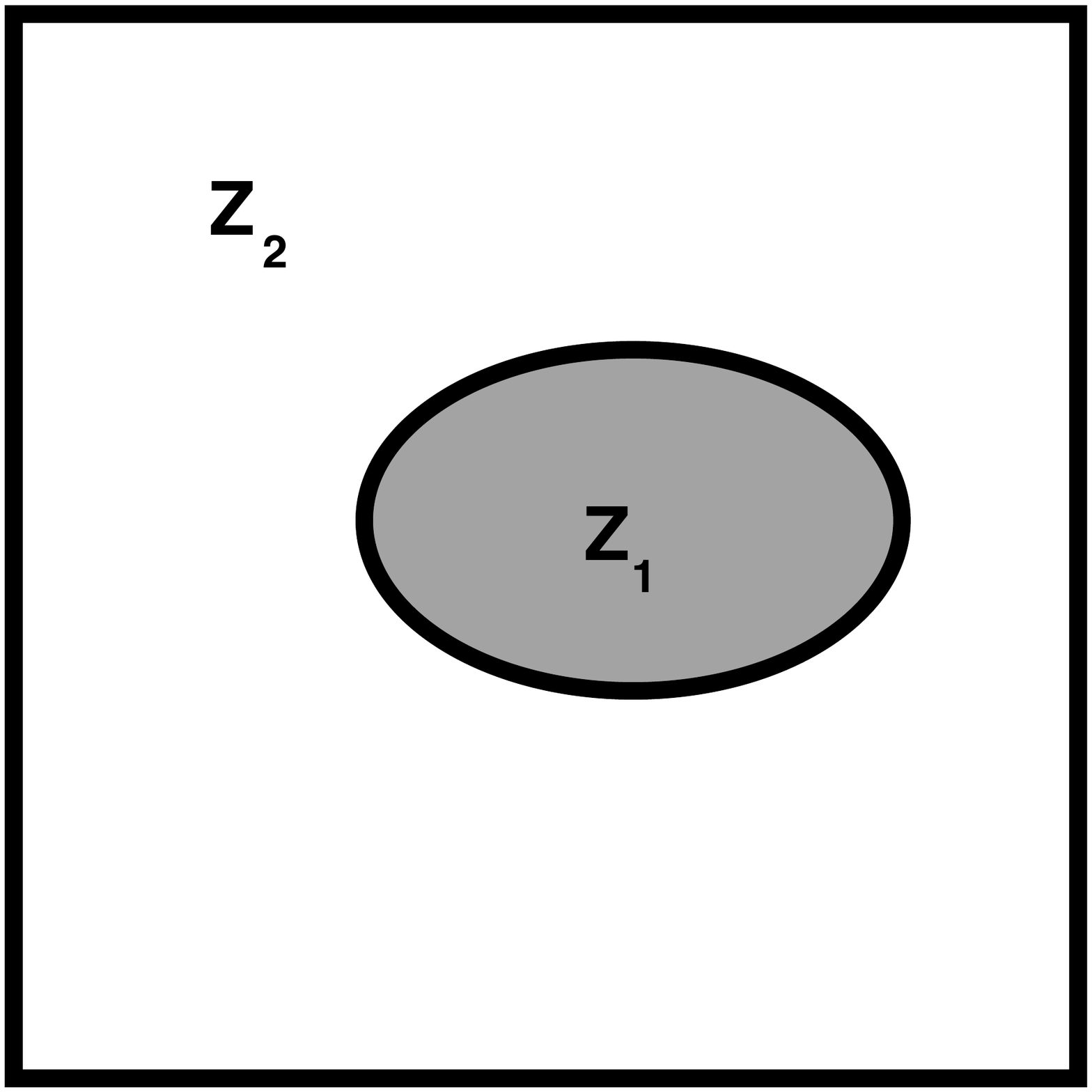}\hfill\includegraphics[scale=.23]{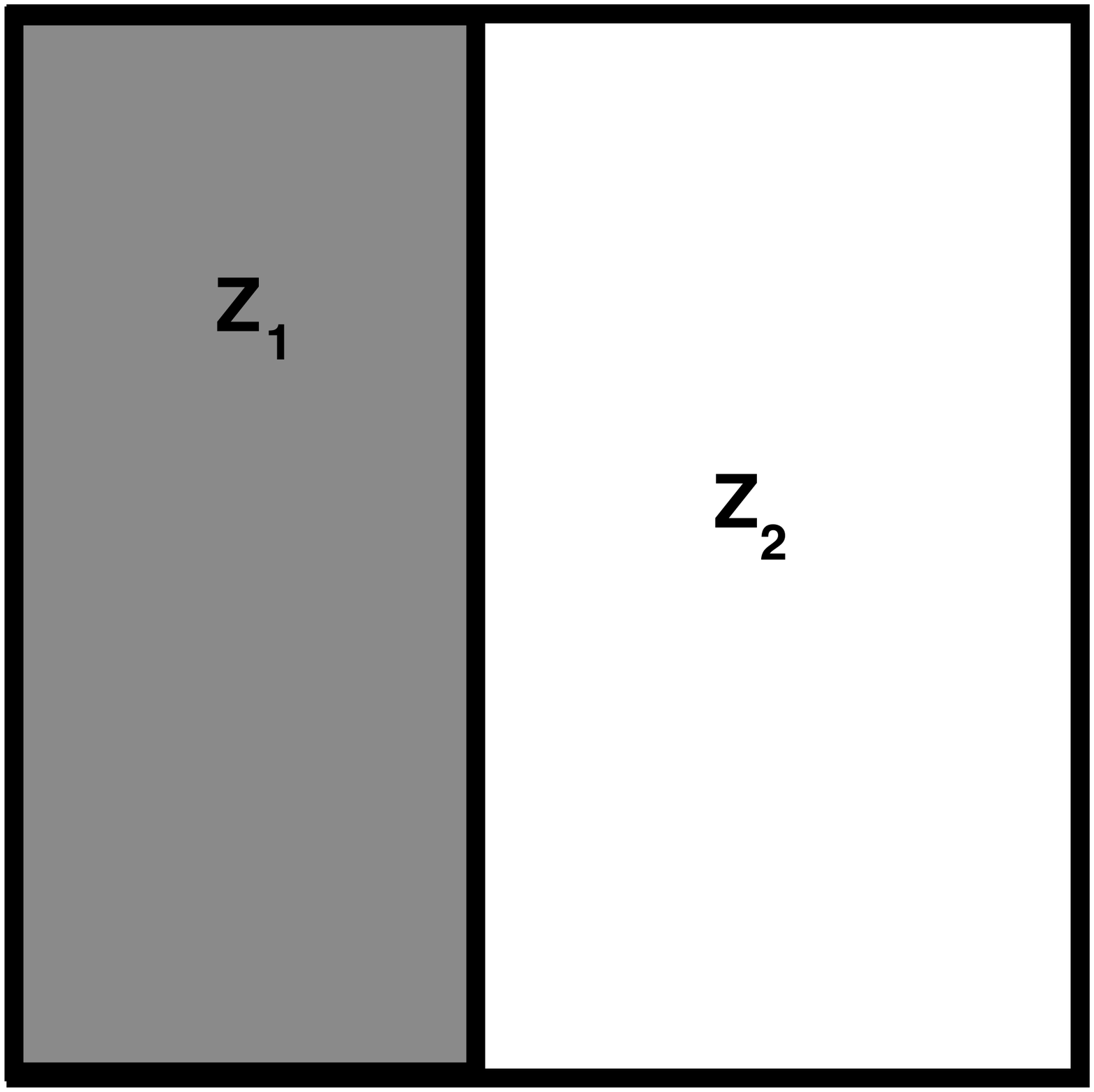}
\caption{\small Typical allowed micro-geometries: inclusion of the good material or layering.}
\label{fig1}
\end{figure}

We then define 
\be{hyp-lam-mu}
\ba{cc}
\LL(y)M= \lambda(y)\,\tr(M)\,I_2+2\mu(y)M, & y\in \T,\ M\in\mtwo
\\[1mm]
\lambda(y)=\lambda_i,\ \mu(y)=\mu_i, & \mbox{in } \ZZ_i,\ i=1,2
\\[1mm]
\ds 0<-\lambda_2-\mu_2= \mu_1<\mu_2,\; \lambda_1+\mu_1>0. &
\ea
\ee
which implies in particular that
$$
\lambda_2+2\mu_2>0,
$$
that is that phase 2 is only  strongly elliptic ($\lambda_2+\mu_2<0$) while phase 1 is very strongly elliptic  ($\lambda_1+\mu_1>0$).
 
Then the following result, which brings together  \cite[Theorem 2.2]{briane.francfort2015} and \cite[Theorem 2.1]{francfort.gloria}, holds true:
\begin{theorem}
\label{thm.lambda}
Under assumptions, \eqref{hyp-ph}, \eqref{hyp-lam-mu},
$
\Lambda(\LL)\ge 0$ and $\LLper(\LL)>0$.
\end{theorem}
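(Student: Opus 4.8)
The plan is to establish the two assertions of Theorem~\ref{thm.lambda} essentially separately, since they have rather different flavours: the nonnegativity $\Lambda(\LL)\ge 0$ is a statement about test functions on all of $\R^2$ with the full microstructure frozen at scale one, while the strict positivity $\LLper(\LL)>0$ is a statement about periodic correctors on the single cell $\Y$. For both, the key algebraic leverage is the null-Lagrangian remark from the introduction: replacing $\LL(y)M$ by $\LL(y)M+\tfrac c2\,{\rm cof}(M)$ leaves every integral $\int \LL(x/\e)\nabla v\cdot\nabla v$ (over $\Om$, or over $\R^2$ for compactly supported $v$, or over a period cell for periodic $v$) unchanged modulo a term that integrates to zero. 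So I would first choose $c$ cleverly, phase by phase, to recast the quadratic form $\LL(y)M\cdot M$ in each phase in a form whose sign is transparent. With the Gutiérrez constraints \eqref{hyp-lam-mu}, the natural choice makes the modified form in phase~$2$ degenerate exactly along the lamination direction (this is the whole point of that combination of Lamé constants), and positive semidefinite; in phase~$1$, very strong ellipticity is untouched, so the modified form stays coercive there. This is the part I would lift wholesale from \cite{briane.francfort2015,francfort.gloria} rather than reprove.

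For $\Lambda(\LL)\ge 0$: take any $v\in\Csc$ and, using the null-Lagrangian trick, rewrite $\iR\LL(y)\nabla v\cdot\nabla v\,dy=\iR\big(\LL(y)\nabla v\cdot\nabla v+\tfrac c2\det\nabla v\big)dy$ with the phase-dependent $c$ just selected. On phase~$1$'s periodic set $Z_1^\#$ the integrand is $\ge \alpha|\nabla v|^2\ge 0$; on phase~$2$'s set $Z_2^\#$ the modified integrand is a perfect square (up to the degenerate direction), hence $\ge 0$. Adding up gives $\iR\LL(y)\nabla v\cdot\nabla v\,dy\ge 0$ for every compactly supported $v$, which is exactly $\Lambda(\LL)\ge 0$. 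The only subtlety is making sure the constant $c$ can indeed be taken globally (it can, since $\det\nabla v$ integrates to zero regardless of how $c$ jumps — one must be slightly careful and argue that the jump in $c$ across the interface contributes nothing, using that $v\mapsto\det\nabla v$ is a null Lagrangian and the interfaces are $C^2$, or equivalently decompose $c\,\det\nabla v=\div(\text{something})$ piecewise and check the interface terms cancel). This is where I expect to spend a little care, but it is not deep.

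For $\LLper(\LL)>0$: suppose not; then there is a sequence $v_n\in H^1_{\rm per}(\YR)$ with $\iY|\nabla v_n|^2\,dy=1$ and $\iY\LL(y)\nabla v_n\cdot\nabla v_n\,dy\to 0$. By the same rewriting, $\iY\LL(y)\nabla v_n\cdot\nabla v_n\,dy$ equals the modified (phasewise) form, which is $\ge\alpha\int_{Z_1}|\nabla v_n|^2$ on phase~$1$ plus a nonnegative square on phase~$2$; so $\nabla v_n\to 0$ in $L^2(Z_1;\mn)$, and moreover the "square" part on $Z_2$ tends to $0$, which pins down the limiting structure of $\nabla v_n$ on $Z_2$ to lie (asymptotically) in the degenerate direction, i.e. $\nabla v_n$ behaves like a gradient that is constant along $X_2^\#$ in the lamination direction. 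Then the connectivity hypotheses in \eqref{hyp-ph} do the work: $X_2^\#$ is the unbounded connected component of $Z_2^\#$, so a function whose gradient is (in the limit) purely in the transverse direction and which is periodic must be (asymptotically) trivial on $X_2^\#$; together with $\nabla v_n\to0$ on $Z_1$ and the finitely many components of $\ZZ_1$, one propagates the vanishing across all of $\Y$, contradicting $\iY|\nabla v_n|^2=1$. I would run this as a compactness-and-contradiction argument (extract a weak limit $v_n\wk v$ in $H^1_{\rm per}$, identify $v$ as a rigid displacement by the above, subtract it off, and then upgrade to strong convergence to contradict the normalization), and \emph{this} identification of the limit using the precise geometry of $X_2^\#$ and the interface regularity is the main obstacle — it is exactly the content of \cite[Theorem 2.2]{briane.francfort2015} and \cite[Theorem 2.1]{francfort.gloria}, so in the present paper I would simply cite those and merely indicate the structure of the argument.
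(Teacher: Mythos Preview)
Your proposal is essentially correct in spirit and matches the paper's treatment: the paper does not prove Theorem~\ref{thm.lambda} at all, but simply records it as the concatenation of \cite[Theorem~2.2]{briane.francfort2015} and \cite[Theorem~2.1]{francfort.gloria}, exactly as you conclude you would do.

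One point in your sketch deserves correction, though, because it resurfaces in Section~\ref{sec.gam} of the paper and you will want to get it right there. You propose a \emph{phase-dependent} constant $c$ and then worry about interface contributions from the jump in $c$ across $\partial\ZZ_1$. This is an unnecessary complication: the constraint $-\lambda_2-\mu_2=\mu_1$ in \eqref{hyp-lam-mu} is precisely engineered so that a \emph{single global} choice $c=4\mu_1$ works simultaneously in both phases. With $\KK_jM:=\LL_jM+2\mu_1\,{\rm cof}(M)$ one has $\KK_jM\cdot M\ge 0$ for $j=1,2$ (see \eqref{Kj} and the line following it), and since $v\mapsto\int\det\nabla v$ is a genuine null Lagrangian on $\Csc$ and on $H^1_{\rm per}(\YR)$, no interface bookkeeping is needed whatsoever. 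Your subsequent outline for $\LLper(\LL)>0$ (compactness--contradiction, using that $\KK_1$ controls the full gradient on $Z_1$ while the kernel of $\KK_2$ on $Z_2$ is the span of $I_2$, then exploiting the connectedness and unboundedness of $X_2^\#$) is indeed the structure of the argument in the cited references.
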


Consequently,  Theorem \ref{thm.gmt1} can be applied to the setting at hand and we obtain the following
\begin{corollary}\label{cor}
Set, under assumptions \eqref{hyp-ph}, \eqref{hyp-lam-mu},
$$
{\mathscr J}^\e(v):=\displaystyle\iO \LL(x/\e) \nabla v\cdot\nabla v\,dx
$$
with $\LL^0$ given by \eqref{form-hom-cl} and 
$$
 {\mathscr J}^0(v):=\displaystyle\iO \LL^0 \nabla v\cdot\nabla v\,dx.
$$
Then ${\mathscr J}^\e\wkgh  {\mathscr J}^0$.
\end{corollary}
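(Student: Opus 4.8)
The plan is to obtain Corollary~\ref{cor} as a direct specialization of the general results recalled in Section~\ref{kr}, with no additional argument of substance. With the conventions introduced there, the two quadratic functionals ${\mathscr J}^\e$ and ${\mathscr J}^0$ are precisely ${\mathscr I}^\e$ and ${\mathscr I}^0$ associated with the isotropic two-phase Hooke's law \eqref{hyp-lam-mu}, regarded as functionals on the reflexive Banach space $X:=H^1_0(\OR)$ (extended by $+\infty$ off $X$, as is implicit in the symbol $\wkgh$). Consequently the proof amounts to verifying that the two hypotheses of Theorem~\ref{thm.gmt1}, namely $\Lambda(\LL)\ge 0$ and $\LLper(\LL)>0$, hold in the present configuration. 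But that is exactly what Theorem~\ref{thm.lambda} asserts under assumptions \eqref{hyp-ph} and \eqref{hyp-lam-mu}. Applying Theorem~\ref{thm.gmt1} then yields ${\mathscr J}^\e\wkgh{\mathscr J}^0$ with $\LL^0$ given by the homogenization formula \eqref{form-hom-cl}, which is the claim.

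Two features of this short argument are worth underlining. First, it is the \emph{strict} positivity $\LLper(\LL)>0$ provided by Theorem~\ref{thm.lambda}, and not merely $\Lambda(\LL)\ge 0$, that lets one invoke Theorem~\ref{thm.gmt1} rather than the weaker Theorem~\ref{thm.bpm}: when $\LLper(\LL)>0$ the cell functional $v\mapsto\iY\LL(y)(M+\nabla v)\cdot(M+\nabla v)\,dy$ is coercive on $H^1_{\rm per}(\YR)$ modulo additive constants and weakly lower semicontinuous there, so its infimum is attained and the formula is a bona fide minimum, as written in \eqref{form-hom-cl}; in particular $\LL^0$ coincides with the effective tensor of the classical very strongly elliptic theory. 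Second, the hypothesis $\Lambda(\LL)\ge 0$ is what guarantees that ${\mathscr J}^\e\ge 0$ on all of $H^1_0(\OR)$, uniformly in $\e$ — by the rescaling $x=\e y$ of the test functions entering the definition of $\Lambda(\LL)$ — without which a $\Gamma$-limit for the weak $H^1_0$-topology could not be a finite quadratic form.

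I do not anticipate any real obstacle at the level of the corollary itself: all the analytic content is already packaged in the cited theorems — the corrector estimates and the div--curl/compensated-compactness machinery behind Theorem~\ref{thm.gmt1}, and the delicate verification of $\Lambda(\LL)\ge 0$ and $\LLper(\LL)>0$ for the Guti\'errez-type laminate and its inclusion-type counterparts carried out in \cite{briane.francfort2015,francfort.gloria}. The only reason to record Corollary~\ref{cor} is to have the weak-$H^1_0$ $\Gamma$-limit of this specific mixture explicitly in hand, so that it can subsequently be compared, in Section~\ref{sec.gam}, with the weak-$L^2$ $\Gamma$-limit established in Theorems~\ref{thm.main} and \ref{thm.mainbis}; that comparison, rather than the corollary, is where the work of the paper lies.
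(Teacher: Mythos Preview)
Your argument is correct and matches the paper's own reasoning exactly: the corollary is stated immediately after Theorem~\ref{thm.lambda} with the one-line justification ``Consequently, Theorem~\ref{thm.gmt1} can be applied to the setting at hand,'' which is precisely what you do. Your additional remarks on why $\LLper(\LL)>0$ upgrades the infimum in \eqref{eq.hom-for} to the minimum in \eqref{form-hom-cl} are accurate and helpful elaborations but not required for the corollary itself.
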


Our goal in the next section is to prove that the Corollary remains true when adding to ${\mathscr J}^\e$ a zeroth order term of the form 
\[
\int_\Omega |v|^2\,dx
\]
and replacing the weak topology on bounded sets of $H^1_0(\OR)$ by that on bounded sets of $L^2(\OR)$.

\section{The elliptic results}\label{sec.gam}
{Consider
$\LL(y)$  given by \eqref{hyp-lam-mu} and $\LL^0$ given by \eqref{eq.hom-for}.}
Set, for $v\in L^2(\OR)$,
\[
{\mathscr I}^\e(v):=\left\{\begin{array}{cl}
\displaystyle\iO\big\{\LL(x/\e) \nabla v\cdot\nabla v+|v|^2\big\}\,dx, & v\in H^1_0(\OR)
\\[2mm]
\infty, & \mbox{else}.
\end{array}\right.
\]
Also define the following two functionals:
\be{eq.def-iep-iop}
 {\mathscr I}^0(v):=\left\{\begin{array}{cl}
 \displaystyle\iO \big\{\LL^0 \nabla v\cdot\nabla v+|v|^2\big\}\,dx, & v\in H^1_0(\OR)
 \\[2mm]
 \infty, & \mbox{else};
 \end{array}\right.
\ee
and, under the additional assumption that
\[
\LL^0_{2222}=0,
\]
\be{eq.def-iep-iopbis}
 {\mathscr I}^{1/2}(v):=\left\{\begin{array}{cl}
 \displaystyle\iO \big\{\LL^0 \nabla v\cdot\nabla v+|v|^2\big\}\,dx, & \ds v\in \XX
 \\[2mm]
 \infty, &\mbox{else},\end{array}\right.
\ee
where, if $\nu$ is the exterior normal on $\partial\Om$,
\begin{multline}\label{eq:X}
\XX:=\Big\{ v\in L^2(\Om;\R^2): v_1\in H^1_0(\Om),\; v_2\in L^2(\Om),
\\[2mm]
\frac{\partial v_2}{\partial x_1}\in L^2(\Om)\;\;\mbox{and}\;\;v_2\,\nu_1=0 \mbox{ on }\partial\Om\Big\}.
\end{multline}

\begin{remark}\label{cross-terms}
In  \eqref{eq.def-iep-iopbis} the cross terms
 $$\int_\Om\frac{\partial u_1}{\partial x_1}\frac{\partial u_2}{\partial x_2}\ dx$$
 must be replaced  by
 $$\int_\Om\frac{\partial u_1}{\partial x_2}\frac{\partial u_2}{\partial x_1}\ dx$$
 so that, provided that $\LL^0_{2222}=0$, which is the case 
in the specific setting at hand,  
the expression $\int_\Om \LL^0 \nabla u\cdot \nabla u\ dx$ has a meaning for $u\in \XX$ and boils down to the classical one when $u\in H^1_0(\OR)$.
\end{remark}

\begin{remark}\label{structureX}
It is immediately checked that  $\XX$ is a Hilbert space when endowed with the following inner product:
$$
\langle u,v\rangle_\XX:= \int_\Om u\cdot v\ dx+\int_\Om \nabla u_1\cdot\nabla v_1\ dx+\int_\Om \frac{\partial u_2}{\partial x_1}\frac{\partial v_2}{\partial x_1}\ dx.
$$

Furthermore, $C^\infty_c(\Om;\R^2)$ is a dense subspace of $\XX$, provided that $\Om$  is $C^1$.  Indeed, take $u\in \XX$. The first component $u_1$ is in $H^1_0(\OR)$. Defining
\[
{
\check u_2(x):=\begin{cases}u_2(x), & x\in\Om
\\[1mm]
0,& \mbox{else}
\end{cases}
}
\]
we have, thanks to the boundary condition in the definition \eqref{eq:X} of $\XX$,
\[
\int_{\R^2}\check u_2 \frac{\partial \ph}{\partial x_1}\ dx+ \int_{\R^2}\frac{\partial u_2}{\partial x_1}\ph\ dx=0.
\]
for any $\ph\in C^\infty_c(\R^2)$, that is
\[
\check u_2, \frac{\partial \check u_2}{\partial x_1} \in L^2(\R^2)\quad\mbox{with}\quad
{
\frac{\partial \check u_2}{\partial x_1}(x)=
\begin{cases}\ds\frac{\partial  u_2}{\partial x_1}(x), & x\in\Om
\\[3mm]
0, & \mbox{else.}
\end{cases}
}
\]

Because $\Om$ has a $C^1$-boundary, we can always assume, thanks to the implicit function theorem,  that, at each point $x^0\in \partial\Om$, there exists a ball {$B(x^0,r_{x^0})$} and a $C^1$-function $f: \R\to\R$ such that 
\[
\Om\cap B(x^0,r_{x^0})=\{(x_1,x_2)\in B(x^0,r_{x^0}): x_2>f(x_1)\}
\]
or
\[
\Om\cap B(x^0,r_{x^0})=\{(x_1,x_2)\in B(x^0,r_{x^0}): x_1>f(x_2)\}.
\]
In the first case, we translate $\check u$ in the direction $x_2$, thereby setting $\check u_2^t(x_1,x_2):=
\check u_2(x_1,x_2-t), \; t>0$, while, in the second case, we translate $\check u_2$ in the direction $x_1$, thereby setting $\check u_2^t(x_1,x_2):=
\check u_2(x_1-t,x_2),\; t>0$. This has the effect of creating a new function $\check u_2$ which is identically null near $
\Om\cap B(x^0,r_{x^0}).$ We then mollify this function with a mollifier $\ph^t$, with support depending on $t$, thereby creating yet a new function ${\tilde{u}^{t}_2}=\ph^t*\check u_2^t\in C^\infty_c(\Om\cap B(x^0,r_{x^0})$ which will be such that
\[
\lim_t \left\{\|{\tilde{u}^{t}_2}- u\|_{L^2(\Om\cap B(x^0,r_{x^0}))}+\big\|\frac{\partial{\tilde{u}^{t}_2}}{\partial x_1}- \frac{\partial  u_2}{\partial x_1}\big\|_{L^2(\Om\cap B(x^0,r_{x^0}))}\right\}=0.
\]
A partition of unity of the boundary and a diagonalization argument then allow one to construct a sequence of $C^\infty_c(\Om)$-functions such that the same convergences take place over $L^2(\Om)$. \hfill\P
\end{remark}

\medskip

We propose to investigate the (sequential) $\Gamma$-convergence properties of ${\mathscr I}^\e$ to ${\mathscr I}^0$ {or ${\mathscr I}^{1/2}$} for the weak topology on bounded sets of $L^2(\OR)$.

 We will prove the following theorems which  address both the case of a laminate and that of a matrix-inclusion type mixture. The first theorem does not completely characterize the $\Gamma$-limit to the extent that it is assumed {\it a priori} that the target field $u$ lies in $H^1_0(\OR)$. By contrast, the second theorem is a complete characterization of the $\Gamma$-limit but it does restrict the geometry of laminate-like mixtures to be that made of {\it bona fide} layers, {\it i.e.,} straight strips of material.
\begin{theorem}[\sf ``Smooth targets"]
\label{thm.main}
Under assumptions \eqref{hyp-ph}, \eqref{hyp-lam-mu}, there exists a subsequence of $\{\e\}$ (not relabeled) such that
$${\mathscr I}^\e \wkg {\mathscr I},$$
 where, for $u\in H^1_0(\OR)$,  ${\mathscr I}(u)= {\mathscr I}^0(u)$ given by \eqref{eq.def-iep-iop} and $\LL^0$  given by \eqref{eq.hom-for} (and, even better, by \eqref{form-hom-cl}).
\end{theorem}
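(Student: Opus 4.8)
The plan is to establish the two halves of the $\Gamma$-convergence statement separately, after first securing compactness, and to exploit heavily the fact that we already know (Corollary \ref{cor}) that $\mathscr J^\e\wkgh\mathscr J^0$, i.e.\ $\Gamma$-convergence holds for the \emph{stronger} topology on bounded sets of $H^1_0$. The addition of the $\int_\Omega|v|^2$ term is what lets us bridge the two topologies: if $u^\e\wk u$ weakly in $L^2(\OR)$ with $\sup_\e\mathscr I^\e(u^\e)<\infty$, then $\sup_\e\|u^\e\|_{L^2}<\infty$ automatically, but that alone does \emph{not} bound $\|\nabla u^\e\|_{L^2}$ because of the failure of very strong ellipticity. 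This is the crux of the matter and where the hypotheses \eqref{hyp-ph}, \eqref{hyp-lam-mu} and Theorem \ref{thm.lambda} (giving $\Lambda(\LL)\ge 0$ and $\LLper(\LL)>0$) must be invoked through the finer structure established in \cite{briane.francfort2015, francfort.gloria}.

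First, for the $\Gamma$-liminf inequality: take $u^\e\wk u$ in $L^2(\OR)$; we must show $\mathscr I(u)\le\liminf_\e\mathscr I^\e(u^\e)$ whenever the right side is finite (and $u\in H^1_0$, as the statement restricts). I would introduce the stress-like fields $\sigma^\e:=\LL(x/\e)\nabla u^\e$ (possibly after the null-Lagrangian shift $M\mapsto\LL M+\frac c2\,\mathrm{cof}\,M$ with a well-chosen $c$, as suggested in the introduction, to improve the quadratic form) and extract, using the energy bound, weak limits of $\sigma^\e$, of $\mathrm{curl}$-type or $\mathrm{div}$-type quantities, and a two-scale limit $u^0(x,y)$ of $\nabla u^\e$. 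The div-curl / compensated-compactness machinery together with the structure of the corrector problem \eqref{eq.hom-for} should then yield $\liminf_\e\iO\LL(x/\e)\nabla u^\e\cdot\nabla u^\e\,dx\ge\iO\LL^0\nabla u\cdot\nabla u\,dx$; the $\int|v|^2$ term passes to the liminf trivially by weak lower semicontinuity of the $L^2$ norm. The delicate point is that a bound on $\mathscr I^\e(u^\e)$ controls $\LL(x/\e)\nabla u^\e\cdot\nabla u^\e$, which is \emph{not} coercive, so the extraction of a meaningful weak limit for $\nabla u^\e$ itself is not free; here one must use that the only directions in which coercivity degenerates are compensated by the null-Lagrangian correction and by the periodic corrector estimate $\LLper(\LL)>0$, exactly as in the proof of Theorem \ref{thm.gmt1} adapted to the present weaker topology.

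Second, for the $\Gamma$-limsup (recovery sequence): given $u\in H^1_0(\OR)$, I would simply reuse the recovery sequence produced for Corollary \ref{cor}, i.e.\ for $\mathscr J^\e\wkgh\mathscr J^0$. That sequence $u^\e$ satisfies $u^\e\wk u$ weakly in $H^1_0$, hence a fortiori weakly in $L^2$, and $\mathscr J^\e(u^\e)\to\mathscr J^0(u)$; moreover $u^\e\to u$ strongly in $L^2(\OR)$ by Rellich, so $\int_\Omega|u^\e|^2\,dx\to\int_\Omega|u|^2\,dx$, and therefore $\mathscr I^\e(u^\e)\to\mathscr I^0(u)=\mathscr I(u)$. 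Thus the limsup inequality is essentially inherited for free; the real content is entirely in the liminf inequality and in the compactness subsequence extraction. Finally, the existence of a $\Gamma$-convergent subsequence follows from the general compactness of $\Gamma$-convergence for a metrizable topology (the weak topology on bounded sets of the separable Hilbert space $L^2(\OR)$ is metrizable), so after passing to such a subsequence the identified liminf and limsup bounds pin down $\mathscr I=\mathscr I^0$ on $H^1_0(\OR)$, which is all that Theorem \ref{thm.main} claims. I expect the principal obstacle to be precisely the liminf step: making sense of, and passing to the limit in, the non-coercive Dirichlet integral along a sequence that is only $L^2$-bounded, which requires transplanting the $H^1_0$-based arguments of \cite{geymonat.muller.triantafyllidis93, briane.francfort2015} into a setting where $\nabla u^\e$ need not be bounded in $L^2$ a priori, and controlling the possible concentration of gradients near $\partial\Omega$ (the phenomenon that, for the full characterization in Theorem \ref{thm.mainbis}, forces the enlarged space $\XX$ and the relaxed boundary condition $v_2\nu_1=0$).
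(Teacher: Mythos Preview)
Your $\Gamma$-limsup and compactness paragraphs are correct and match the paper exactly: the recovery sequence from Corollary~\ref{cor} does the job via Rellich, and the subsequence is extracted by abstract $\Gamma$-compactness.

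The liminf sketch, however, has a genuine gap. You propose to extract ``a two-scale limit $u^0(x,y)$ of $\nabla u^\e$'', but $\nabla u^\e$ is \emph{not} bounded in $L^2$ under the energy bound alone---that is precisely the lack of coercivity you flag---so no such extraction is available. The paper works instead with the two quantities that \emph{are} bounded: $u^\e$ itself (bounded in $L^2$ by the zeroth-order term, two-scale limit $u^0(x,y)$) and $\KK^{1/2}(x/\e)\nabla u^\e$ (bounded because $\KK\ge 0$ after the null-Lagrangian shift, two-scale limit $S(x,y)$). The entire content of the liminf argument, absent from your outline, is then:
\begin{itemize}
\item[\textbf{(a)}] Show that $u^0(x,y)=u(x)$ is independent of $y$. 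This comes from $\e\,\KK(x/\e)\nabla u^\e\twosc 0$, which forces $\KK_j\nabla_y u^0=0$ in phase $j$; one then computes the kernels of $\KK_1,\KK_2$ explicitly from \eqref{hyp-lam-mu}, obtains Cauchy--Riemann type relations in each phase, and uses the geometric hypotheses \eqref{hyp-ph} (unbounded connected component of $Z_2^\#$, finitely many $C^2$ components of $\ZZ_1$, a harmonic-potential argument) together with a transmission condition to conclude. This is where the specific material and geometric assumptions actually enter the liminf.
\item[\textbf{(b)}] Once $u^0=u(x)$, test \eqref{conv2-sr} against $\Phi$ with $\div_y(\KK^{1/2}\Phi)=0$, identify $S=\KK^{1/2}\nabla_x u+\xi$ with $\xi$ in the $L^2$-closure of $\{\KK^{1/2}\nabla_y v:v\in H^1_\sharp\}$, and invoke two-scale lower semicontinuity $\liminf\|\KK^{1/2}\nabla u^\e\|^2\ge\|S\|^2$ to recover the cell formula \eqref{eq.hom-for}.
\end{itemize}
Your appeal to $\LLper(\LL)>0$ and div--curl machinery for the liminf is off-target: $\LLper>0$ is used only for the limsup (through Corollary~\ref{cor}); the liminf is a pure two-scale argument driven by the algebra of $\mathrm{Ker}\,\KK_j$, not by any periodic coercivity constant.
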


\begin{theorem}[\sf ``General targets"]
\label{thm.mainbis}
Under assumptions \eqref{hyp-ph}, \eqref{hyp-lam-mu}, then the following holds true:
\begin{itemize}
\item[(i)] If {$\bar{Z}_1\subset\mathring{\Y}$} (the inclusion case)  and if  $\Om$ is a bounded open Lipschitz domain in $\R^2$,  then
$${\mathscr I}^\e \wkg {\mathscr I}^0$$
given by \eqref{eq.def-iep-iop} and $\LL^0$  given by \eqref{eq.hom-for} (and, even better, by \eqref{form-hom-cl});
\item[(ii)] If $Z_1=(0,\theta)\times (0,1)$ (or $(0,1)\times (0,\theta)$) (the straight layer case) and if $\Om$ is a bounded open $C^1$ domain in $\R^2$, then
$${\mathscr I}^\e \wkg \begin{cases}{\mathscr I}^0&\mbox{ if } \theta\ne 1/2\\[2mm]{\mathscr I}^{1/2}&\mbox{ if } \theta= 1/2\;\; \mbox{(the Gutiérrez case)}\end{cases} $$
which are given by \eqref{eq.def-iep-iop}, \eqref{eq.def-iep-iopbis}, respectively,  and  with $\LL^0$  given by \eqref{eq.hom-for} (and, even better, by \eqref{form-hom-cl}).
\end{itemize}
\end{theorem}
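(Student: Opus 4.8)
The plan is to prove the two $\Gamma$-convergence inequalities for the weak topology of $L^2(\OR)$: the $\Gamma$-$\limsup$ (recovery) inequality, which is the soft direction, and the $\Gamma$-$\liminf$ inequality, which is the crux.

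For the \emph{recovery inequality}, a target $u\in H^1_0(\OR)$ is handled by the sequence provided by Corollary~\ref{cor} for the weak $H^1_0$ topology: weak $H^1_0$ convergence implies weak $L^2$ convergence and, by Rellich, the extra term $\iO|v|^2\,dx$ passes to the limit, so $\limsup_\e{\mathscr I}^\e(u^\e)={\mathscr I}^0(u)$. This covers case (i), case (ii) with $\theta\ne1/2$, and the Guti\'errez case for targets in $H^1_0(\OR)$. For a target $u\in\XX\setminus H^1_0(\OR)$ (Guti\'errez case only) I would use density: by Remark~\ref{structureX} choose $u^k\in C^\infty_c(\Om;\R^2)\subset H^1_0(\OR)$ with $u^k\to u$ in $\XX$, hence strongly in $L^2$; since ${\mathscr I}^0(u^k)={\mathscr I}^{1/2}(u^k)$ for $u^k\in H^1_0$ (the two integrands differ by a multiple of $\det\nabla u^k$, whose integral vanishes, cf. Remark~\ref{cross-terms} and $\LL^0_{2222}=0$), since ${\mathscr I}^{1/2}$ is continuous on $\XX$ (its integrand involves only $\partial_1v_1,\partial_2v_1,\partial_1v_2,|v|^2$), and since the $\Gamma$-$\limsup$ functional is $L^2$-lower semicontinuous, one gets $\Gamma\text{-}\limsup_\e{\mathscr I}^\e(u)\le\liminf_k{\mathscr I}^{1/2}(u^k)={\mathscr I}^{1/2}(u)$.

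For the \emph{liminf inequality}, take $u^\e\wk u$ weakly in $L^2(\OR)$ with $\liminf_\e{\mathscr I}^\e(u^\e)<\infty$ and pass to a subsequence attaining the liminf; then $\sup_\e{\mathscr I}^\e(u^\e)<\infty$, so $u^\e\in H^1_0(\OR)$. Since $v\mapsto\det\nabla v$ is a null Lagrangian on $H^1_0$,
\[
{\mathscr I}^\e(u^\e)-\iO|u^\e|^2\,dx=\iO\big\{\LL(x/\e)\nabla u^\e\cdot\nabla u^\e+4\mu_1\det\nabla u^\e\big\}\,dx .
\]
The key algebraic fact, for the Lam\'e triple of \eqref{hyp-lam-mu}, is that $c=4\mu_1$ is the unique constant for which $M\mapsto\LL_iM\cdot M+c\det M$ is positive semi-definite in \emph{both} phases, with the explicit sums of squares ($M=\nabla u^\e$, $\Om_i^\e:=\{x\in\Om:x/\e\in Z_i^\#\}$)
\[
\LL_1M\cdot M+4\mu_1\det M=(\lambda_1+2\mu_1)(\div u^\e)^2+2\mu_1({\rm curl}\,u^\e)^2\quad\text{in }\Om_1^\e,
\]
\[
\LL_2M\cdot M+4\mu_1\det M=(\mu_2-\mu_1)\big[(\partial_1u^\e_1-\partial_2u^\e_2)^2+(\partial_2u^\e_1+\partial_1u^\e_2)^2\big]+(\mu_2+\mu_1)({\rm curl}\,u^\e)^2\quad\text{in }\Om_2^\e .
\]
Hence $\div u^\e,{\rm curl}\,u^\e$ are bounded in $L^2(\Om_1^\e)$, while $\partial_1u^\e_1-\partial_2u^\e_2,\partial_2u^\e_1,\partial_1u^\e_2$ are bounded in $L^2(\Om_2^\e)$; equivalently ${\rm curl}\,u^\e$ is bounded in $L^2(\Om)$ and the ``modified'' stress $\sigma^\e:=\LL(x/\e)\nabla u^\e+2\mu_1{\rm cof}(\nabla u^\e)$ is bounded in $L^2(\ORR)$. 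Also $\iO|u^\e|^2\,dx\le C$.

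It remains to upgrade these partial, phasewise bounds and pass to the limit, and this is where I expect the real difficulty — and where the microgeometry becomes essential. In case (i), using that the good phase~1 sits compactly inside the cell while phase~2 is connected, a unit-cell Korn--Poincar\'e estimate transfers the missing strain components between phases and produces a bound on $\nabla u^\e$ in $L^2(\ORR)$; then $u\in H^1_0(\OR)$ and the liminf inequality $\liminf_\e{\mathscr I}^\e(u^\e)\ge{\mathscr I}^0(u)$ follows from Corollary~\ref{cor} (plus strong $L^2$ convergence of $u^\e$ and weak lower semicontinuity of $\iO|v|^2$). In case (ii), $\LL(x/\e)=\LL(x_1/\e)$ depends on $x_1$ only; slicing in the lamination direction $x_1$ and exploiting the div--curl information above, I would show the limit satisfies $u_1\in H^1_0(\Om)$, $\partial_1u_2\in L^2(\Om)$ and $u_2\nu_1=0$ on $\partial\Om$, i.e. $u\in\XX$, and moreover that when $\theta\ne1/2$ (so $\LL^0_{2222}>0$) one also recovers $\partial_2u_2\in L^2(\Om)$, i.e. $u\in H^1_0(\OR)$, while at $\theta=1/2$ one cannot do better than $u\in\XX$. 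Finally, to identify the density: when $u\in H^1_0$ invoke Corollary~\ref{cor} as above; when $u\in\XX$ only, run a two-scale convergence argument on the non-negative integrand $\LL(x/\e)\nabla u^\e\cdot\nabla u^\e+4\mu_1\det\nabla u^\e$ — its bounded ``safe'' strains two-scale converge, the non-negative quadratic form is two-scale lower semicontinuous, and minimising over the admissible periodic correctors reconstructs the homogenised density $\LL^0M\cdot M+4\mu_1\det M$, which (using $\LL^0_{2222}=0$ and Remark~\ref{cross-terms}) is precisely the integrand of ${\mathscr I}^{1/2}$ — then add the separately weakly lower semicontinuous term $\iO|u|^2\,dx$. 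The genuinely delicate ingredients are thus the geometric upgrade, together with the role of $\theta$ in it, and, in the degenerate case, the two-scale lower bound carried out when $\nabla u^\e$ is not bounded in $L^2$ but only its safe combinations are, which requires care with the corrector spaces and with the boundary condition $u_2\nu_1=0$.
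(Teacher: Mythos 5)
Your overall architecture (adding the null Lagrangian $4\mu_1\det$ to make the density pointwise nonnegative, the dichotomy $H^1_0(\OR)$ versus $\XX$, and the density-plus-diagonalization argument for the $\Gamma$-$\limsup$ in the Guti\'errez case) matches the paper, but the central steps of the $\Gamma$-$\liminf$ are left as assertions, and the one you make precise in case (i) would fail. You claim that ${\mathscr I}^\e(u^\e)\le C$ yields a uniform bound on $\nabla u^\e$ in $L^2(\ORR)$ via a unit-cell Korn--Poincar\'e transfer between phases. By a scaling argument this claimed implication is equivalent to $\Lambda(\LL)>0$: taking competitors concentrated at scale $\e$ (rescalings of almost-minimizers of the functional coercivity constant), the zeroth-order term $\iO|v|^2dx$ is of lower order and disappears, so uniform-in-$\e$ coercivity of ${\mathscr I}^\e$ in the $H^1$ norm forces $\Lambda(\LL)>0$. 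But the only available information here is $\Lambda(\LL)\ge0$ (Theorem \ref{thm.lambda}); if $\Lambda(\LL)>0$ were known in the inclusion case, classical homogenization (\cite{francfort92}) would apply and most of the present analysis would be unnecessary. This is why the paper never bounds $\nabla u^\e$: it only derives regularity of the \emph{limit} $u$. Concretely, it two-scale converges $u^\e$ itself, shows first that the two-scale limit does not oscillate ($u^0(x,y)=u(x)$, a nontrivial step using the connectedness of $Z_2^\#$, the structure of ${\rm Ker}\,\KK_j$, and a trace/harmonicity argument on the components of $Z_1^\#$), and then tests against fields $\Phi$ with $\div\big(\KK^{1/2}(y)\Phi(y)\big)=0$ built from explicit periodic potentials (a cut-off quadratic $\Psi$ in case (i), the layered corrector $My+(\int_0^{y_1}[\chi-\theta]dt)\,\xi$ in case (ii)). The attainable averages $N=\int_\Y\KK^{1/2}\Phi\,dy$ give $N\cdot\nabla u\in L^2(\Om)$ for all trace-free $N$ in case (i), and for $N_\theta$ with $(2\theta-1)N_{11}+N_{22}=0$ in case (ii); Korn then yields $u\in H^1$, and a Lebesgue-point/trace argument with $\varphi\in C^\infty(\overline\Om)$ yields the boundary conditions ($u\in H^1_0(\OR)$, resp. $u_1\in H^1_0$, $u_2\nu_1=0$). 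Your ``slicing'' plan for case (ii) states the right conclusions (including the special role of $\theta=1/2$) but gives no mechanism producing the $\theta$-dependent combination $\partial_1u_1+(2\theta-1)\partial_2u_2$.

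The same missing machinery undermines your final step in the Guti\'errez case. The ``two-scale lower bound with correctors'' you invoke is exactly the paper's Step 2/Step 3, but it requires (a) the non-oscillation $u^0(x,y)=u(x)$, which you never prove, and (b) the identification of the two-scale limit $S$ of $\KK^{1/2}(x/\e)\nabla u^\e$ through integration by parts against $u^\e\in L^2$ only, which is precisely why the test fields must satisfy $\div(\KK^{1/2}\Phi)=0$, together with the fact (Remark \ref{gen.char.m22=0}) that at $\theta=1/2$ every admissible $\Phi$ gives $N_{22}=0$, so that only $\nabla' u$ (without $\partial_2u_2$) can and does appear in the limit inequality \eqref{eq:ga-liminf-bis}. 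Without these ingredients the lower bound for targets $u\in\XX\setminus H^1_0(\OR)$ is not established. (Two minor slips, immaterial to the structure: in your sums of squares the curl coefficients should be $\mu_1$ in both phases -- the $R^\perp$-eigenvalue of $\KK_j$ is $2\mu_1$ -- not $2\mu_1$ in phase 1 and $\mu_1+\mu_2$ in phase 2.)
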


\begin{remark}
In strict parallel with Remark 2.6 in \cite{briane.francfort2015}, we do not know whether the result of {those Theorems  still hold} true when $H^1_0(\OR)$ is replaced by $H^1(\OR)$ in the $\Gamma$-convergence statement.
\hfill\P
\end{remark}

\begin{remark}
We could generalize the inclusion condition $(i)$ $\bar{Z}_1\subset\mathring{\Y}$ as follows. Consider $n$ regular compact sets $K_1,\dots, K_n$ of $\R^2$ such that all the translated $K_j+\kappa$ for $j\in\{1,\dots,n\}$ and $\kappa\in\Z^2$, are pairwise disjoint. Then  define the $\Y$-periodic  phase 1 by
\[
Z^\#_1:=\bigcup_{j=1}^n\bigcup_{\kappa\in\Z^2}(K_j+\kappa).
\]
All subsequent results  pertaining to case (i)  extend to this enlarged setting.
\hfill\P
\end{remark}

\begin{remark}\label{sse.l0} The strict strong ellipticity of $\LL^0$ in Theorem \ref{thm.mainbis} is known. 

In case (i), $\LL^0$ remains strictly strongly elliptic. This is  explicitly stated in \cite[Theorem 2.2]{francfort.gloria} under a restriction of isotropy although the proof immediately extends to the fully anisotropic case as well.

In case (ii), strict strong ellipticity is preserved except in the Gutiérrez case ($\theta=1/2$)  in which case $\LL^0_{2222}=0$, as first evidenced in \cite{gutierrez99}.\hfill\P
\end{remark}

Subsections \ref{thmm.main}, \ref{sub.sec.proofbis} are devoted to the proofs of {Theorem \ref{thm.main}, \ref{thm.mainbis}}.
 
\subsection{{Proof of Theorem \ref{thm.main}}}\label{thmm.main}
First,  because of the compactness of the injection mapping from $H^1_0(\OR)$ into $L^2(\OR)$ and in view of Corollary \ref{cor},
$$
{\mathscr I}^\e \wkgh  {\mathscr I}^0
$$
with $\LL^0$ actually given by \eqref{form-hom-cl} (a min in lieu of an inf).
We want to prove that the same result holds for the weak $L^2$-topology, {at least for a subsequence of $\{\e\}$. By a classical compactness result we can assert the existence of a subsequence of $\{\e\}$ such that the $\Gamma\mbox{-}\lim$ exists. Our goal is to show that that limit, denoted by $\mathscr I(u)$, is precisely $\mathscr I^0(u)$ when $u\in H^1_0(\OR)$.}
Clearly, the $\Gamma\mbox{-}\limsup$ inequality will {\em a fortiori} hold in that topology, {\it provided that the target field $u\in H^1_0(\OR)$}. It thus remains to address the proof of the $\Gamma\mbox{-}\liminf$ inequality which is what the rest of this subsection is about.

\medskip

To that end and in the spirit of \cite{gutierrez99}, we add an integrated null Lagrangian to the energy so as to render the energy density pointwise nonnegative. Thus we set, for any $M\in \mtwo$,
{
\be{Kj}
\KK_j M := \mathbb L_j M + 2\mu_1 {\rm cof}\left(M\right)  = \lambda_j \tr(M) I_2 + \mu_j (M+M^T) + 2\mu_1 {\rm cof}\left(M\right),\;\; j=1,2
\ee
}
(thereby taking $c$ at the end of the introduction to be {$4\mu_1$}) so that
\[
\KK_j M\cdot M = \mathbb L_j M\cdot M + 4\mu_1 \det\left(M\right)\geq 0,\quad j=1,2
\]
and define
$$
\KK(y)\equiv \KK_j \mbox{ in }\ZZ_j, \quad j=1,2.
$$
Because the determinant is a null Lagrangian, for $v\in H^1_0(\OR)$,
\be{form.with.K}
{\mathscr I}^\e(v)= \iO \big\{\KK(x/\e) \nabla v\cdot\nabla v+|v|^2\big\}\,dx
\ee
Consider a sequence  $\{u^\e\}_\e$ converging weakly in $L^2(\OR)$ to $u\in L^2(\OR)$. Then, for a subsequence (still indexed by $\e$), we are at liberty to assume that
$
\liminf {\mathscr I}^\e(u^\e)
$ is actually a limit. The $\Gamma\mbox{-}\liminf$ inequality is trivial if that limit is $\infty$ so that we can also assume henceforth that, for some $\infty>C>0$,
\be{eq:bd-funct}
{\mathscr I}^\e(u^\e)\le C.
\ee
Further, according to {\em e.g.} \cite[Theorem 1.2]{allaire92}, a subsequence (still indexed by $\e$) of that sequence two-scale converges to some $u^0(x,y)\in L^2(\OYR)$. In other words,
\be{conv2-u}
u^\e \twosc u^0.
\ee
Also, in view of \eqref{eq:bd-funct} and because
 \be{K-nonneg}
 \KK(y) \mbox{  is a nonnegative as a quadratic form}
 \ee
 while clearly all its components are bounded, for yet another subsequence (not relabeled),
$$
\KK(x/\e)\nabla u^\e \twosc H(x,y)\quad\mbox{with } H\in L^2(\OYRR),
$$
and also, for future use,
\be{conv2-sr}
\KK^{\frac12}(x/\e)\nabla u^\e \twosc S(x,y)\quad\mbox{with } S\in L^2(\OYRR).
\ee
In particular,
\be{conv2-efl}
\e\,\KK(x/\e)\nabla u^\e \twosc 0.
\ee
Take $\Phi(x,y) \in \Cinf$  with compact support in $\Om\times \ZZ_1$. From \eqref{conv2-efl} we get, with obvious notation, that
\begin{multline*}
0=-\lim_{\e\to 0}\int_{\Om}\e\,\KK(x/\e)\nabla u^\e\cdot\Phi(x,x/\e)\,dx=
\\[2mm]
\lim_{\e\to 0}\;\sum_{ijkh}\int_{\Om}(\KK)_{ijkh}(x/\e)\,u^\ep_k\,\frac{\partial \Phi_{ij}}{\partial y_h}(x,x/\e)\,dx=
\\[2mm]
\sum_{ijkh}\itwo{1} (\KK_1)_{ijkh}\,u^0_k(x,y)\,\frac{\partial \Phi_{ij}}{\partial y_h}\,dx\,dy
=-\itwo{1} \big(\KK_1 \nabla_y u^0(x,y)\big)\cdot  \Phi(x,y)\,dx\,dy,
\end{multline*}
so that
\be{eq:o-grad}
\KK_1 \nabla_y u^0(x,y)\equiv 0\quad\mbox{in } \Om\times {Z^\#_1},
\ee
and similarly
\be{eq:o-gradbis}
\KK_2 \nabla_y u^0(x,y)\equiv 0\quad\mbox{in } \Om\times {Z^\#_2}.
\ee
In view of the explicit expressions {\eqref{Kj} for $\KK_j$,} \eqref{eq:o-grad}, \eqref{eq:o-gradbis} imply that
{
\begin{align*}
\lambda_j\left(\frac{\partial u^0_1}{\partial y_1}+\frac{\partial u^0_2}{\partial y_2}\right)+2\mu_j \frac{\partial u^0_1}{\partial y_1}+2\mu_1 \frac{\partial u^0_2}{\partial y_2}=0
\\[2mm]
\lambda_j\left(\frac{\partial u^0_1}{\partial y_1}+\frac{\partial u^0_2}{\partial y_2}\right)+2\mu_j \frac{\partial u^0_2}{\partial y_2}+2\mu_1 \frac{\partial u^0_1}{\partial y_1}=0
\\[2mm]
\mu_j\left(\frac{\partial u^0_1}{\partial y_2}+\frac{\partial u^0_2}{\partial y_1}\right)-2\mu_1\frac{\partial u^0_1}{\partial y_2}=0
\\[2mm]
\mu_j\left(\frac{\partial u^0_1}{\partial y_2}+\frac{\partial u^0_2}{\partial y_1}\right)-2\mu_1\frac{\partial u^0_2}{\partial y_1}=0.
\end{align*}
}
So, {in phase 1, that is on $Z^\#_1$,} using~\eqref{hyp-lam-mu} we get
\be{eq:ph1}
\frac{\partial u^0_1}{\partial y_1}+\frac{\partial u^0_2}{\partial y_2}=0,\quad 
\frac{\partial u^0_2}{\partial y_1}-\frac{\partial u^0_1}{\partial y_2}=0
\ee
while {in phase 2, that is on $Z^\#_2$,} still using \eqref{hyp-lam-mu} we get
\be{eq:ph2}
\frac{\partial u^0_1}{\partial y_1}=\frac{\partial u^0_2}{\partial y_2},\quad 
\frac{\partial u^0_2}{\partial y_1}=\frac{\partial u^0_1}{\partial y_2}=0.
\ee

From  \eqref{eq:ph1} we conclude that, {in phase 1,}
\begin{equation}\label{eq:lap-ph1}
\triangle_y u^0_1=\triangle_y u^0_2=0.
\end{equation}

\medskip
\noindent {\sf Step 1 -- $u^0$ does not oscillate.}  We now exploit the two previous set of relations under the micro-geometric assumptions of Theorem \ref{thm.main} to demonstrate that
\be{uo-indt-y}
u^0(x,y)=u(x) \mbox{ is independent of }y,
\ee
where, thanks to \eqref{conv2-u},
\be{conv-1sc}
u^\e \wk u, \mbox{ weakly in } L^2(\OR).
\ee

We first notice that, in view of \eqref{eq:ph2} and  because  $X_2^\#$ is connected (see \eqref{hyp-ph}),
\[
u^0_1(x,y)=\alpha(x)\,y_1+\beta(x),\quad u^0_2(x,y)=\alpha(x)\,y_2+\gamma(x),\quad y \in X_2^\#,
\]
for some functions $\alpha(x), \beta(x),\gamma(x)$. By $Y_2$-periodicity of $u^0_i$ and because $X_2^\#$ is unbounded, $\alpha(x)=0$.
Thus $\nabla_y u^0=0$, or equivalently,
\be{eq:uo-ph2}
u^0(x,y)=u(x),\ y\in Z^\#_2\left(=\bigcup_{\kappa\in\Z^2}(X_2^\#+\kappa)\right),
\ee 
for some $u\in L^2(\OR)$.

Consider $\Phi\in C^1_\sharp(\Y;\R^{2\times 2})$ with
\be{nc-Phi}
\sum_{ijh}  (\KK_1-\KK_2)_{ijkh}\Phi_{ij}(y)\nu_h(y)=0\quad\mbox{on } \partial Z^\#_1,
\ee
that condition being necessary for {${\rm div}_y (\KK(y)\Phi(y))$ } to be an admissible test function for two-scale convergence. In \eqref{nc-Phi} $\nu(y)$ denotes the exterior normal to $Z^\#_2$ at~$y$. 

In view of \eqref{conv2-efl}, \eqref{eq:uo-ph2}, \eqref{nc-Phi}, we get that, for any  $\varphi\in C^\infty_c(\Omega; C^\infty_\sharp(\Y)),$   

\begin{multline*}
0=-\lim_{\e\to 0}\int_{\Om}\e\,\KK(x/\ep)\nabla u_\e\cdot\ph(x,x/\e)\Phi(x/\e)\,dx
\\
=\lim_{\e\to 0}\int_{\Om}\left[\frac\partial{\partial y_h}\big\{(\KK(y))_{ijkh} \varphi(x,y)\Phi_{ij}(y)\big\}\right]\!(x,x/\e)\,(u_\ep)_k\,dx=
\\
\sum_{ijkh}\int_{\Om \times Z_1} \frac\partial{\partial y_h}\big\{(\KK_1)_{ijkh} \varphi(x,y)\Phi_{ij}(y)\big\} u^0_k(x,y)\,dx\,dy \ +
\\
\sum_{ijkh}\int_{\Om \times Z_2} \frac\partial{\partial y_h}\big\{(\KK_2)_{ijkh} \varphi(x,y)\Phi_{ij}(y)\big\} u_k(x)\,dx\,dy=
\\
\sum_{ijkh}\int_{\Om \times Z_1} \frac\partial{\partial y_h}\big\{(\KK_1)_{ijkh} \varphi(x,y)\Phi_{ij}(y)\big\} u^0_k(x,y)\,dx\,dy \ +
\\
\sum_{ijkh}\int_{\Om\times {\partial Z_1}} (\KK_1)_{ijkh} u_k(x)\nu_h(y) \varphi(x,y)\Phi_{ij}(y)\,dx\,d\h_y.
\end{multline*}

Set $v^0(x,y):= u^0(x,y)-u(x)$.
Then,
\[
\int_{\Om \times Z_1} {\rm div}_y\big\{\varphi(x,y)\,\KK_1\Phi(y)\big\}\cdot v^0(x,y)\,dx\,dy=0.
\]
Now take $\ph\in C^\infty_c(\Om\times\R^2)$. Using the periodized function 
\[
\ph^\#(x,y):=\sum_{\kappa\in\Z^2}\ph(x,y+\kappa)
\]
as new test function we obtain
\be{eq:v0}
\begin{array}{ll}
\ds 0 & \ds =\int_{\Om \times Z_1} {\rm div}_y\big\{\varphi^\#(x,y)\,\KK_1\Phi(y)\big\}\cdot v^0(x,y)\,dx\,dy
\\[5mm]
& \ds =\sum_{\kappa\in\Z^2}\int_{\Om \times (Z_1+\kappa)} {\rm div}_y\big\{\varphi(x,y)\,\KK_1\Phi(y)\big\}\cdot v^0(x,y)\,dx\,dy
\\[5mm]
& \ds =\int_{\Om \times Z_1^\#} {\rm div}_y\big\{\varphi(x,y)\,\KK_1\Phi(y)\big\}\cdot v^0(x,y)\,dx\,dy.
\end{array}
\ee
Now simple algebra using the explicit expression for $\KK_1,\KK_2$ as well as \eqref{hyp-lam-mu} shows that, for any $\xi\in\R^2$ and $\nu \in S^1$, there exists a unique matrix $\Phi$ such that
\be{bc-Phi}
\ds (\KK_1\Phi)\,\nu=(\KK_2\Phi)\,\nu=\xi,
\ee
so that, in particular, \eqref{nc-Phi} can always be met, provided that each connected component of $\ZZ_1$ has a $C^2$ boundary because the normal $\nu(y)$ is then a $C^1$-function of $y\in \partial \ZZ_1$ so that one can define $\Phi(y)$ satisfying \eqref{bc-Phi} as a $C^1$ function on $\partial \ZZ_1$, hence by {\em e.g.} Whitney's extension theorem as a $C^1$ function on $\T$.

Consider a connected component ${Z^\#}$ of $Z^\#_1$ in $\R^2$. 
Recall that $\KK_1\nabla_yv^0=0$ in $Z^\#_1$.  In view of \eqref{eq:v0}, \eqref{bc-Phi}, and the arbitrariness of $\ph$, $\xi$,  an integration by parts yields that $v^0(x,\cdot)$ has a trace on $\partial Z^\#$ which satisfies
\begin{equation}\label{eq:trace}
v^0(x,\cdot)=0\mbox{ on }\partial Z^\#.
\end{equation}

Fix $x$. According to \eqref{eq:ph1},  there exists a potential $\zeta_x\in H^1(Z^\#\cap (-R,R)^2)$ for any $R>0$, such that
\[
v^0(x,y)=R^\perp \nabla \zeta_x(y)
\]
and
\[
\triangle_y \zeta_x=0\;\;\mbox{in } Z^\#.
\]
Further, in view of \eqref{eq:trace}, 
\[
R^\perp\nabla\zeta_x\cdot\nu=\nabla\zeta_x\cdot\nu^\perp=0\;\;\mbox{on } \partial Z^\#,
\]
so that $\zeta_x$ is constant on each connected component of $\partial Z^\#$.
Thus, by elliptic regularity $\zeta_x\in H^2(Z^\#\cap (-R,R)^2)$ for any $R>0$, hence $v^0\in H^1(Z^\#\cap (-R,R)^2)$ for any $R>0$.
Thanks to \eqref{eq:lap-ph1}, \eqref{eq:trace} and the periodicity of $v^0(x,\cdot)$, we conclude that $v^0\equiv0$, hence \eqref{uo-indt-y}.
}

\bigskip
\noindent {\sf Step 2 -- Identification of the $\Gamma\mbox{-}\liminf$.}  
Consider  $\Phi \in L^2_\sharp(\Y;\mn)$ such that 
\be{eq:Zxy}
\div\big(\KK^{\frac12}(y)\Phi(y)\big)=0\quad\mbox{in }\R^2,
\ee
or equivalently,
\[
\int_{\Y}\KK^{\frac12}(y)\Phi(y)\cdot\nabla\psi(y)\,dy=0\quad\forall\,\psi\in H^1_\sharp(\Y;\R^2),
\]
 and also consider $\varphi\in C^\infty(\Omb)$.

Then, since $u^\e\in H^1_0(\OR)$ and
in view of \eqref{eq:Zxy},
\begin{multline*}
\iO\varphi(x)\,\KK^{\frac12}(x/\e)\nabla u^\e \cdot \Phi(x/\e)\,dx=
-\sum_{ijkh}\iO u_k^\e\,\cdot\,\KK^{\frac12}_{ijkh}(x/\e)\Phi_{ij}(x/\e)\frac{\partial \varphi}{\partial x_h}(x)\,dx.
\end{multline*}
Recalling
\eqref{conv2-u}, \eqref{conv2-sr}, we can pass to the two-scale limit in the previous expression and obtain, thanks to \eqref{uo-indt-y},
\be{eq:ident-s}
\displaystyle \itwot\varphi(x) S(x,y) \cdot \Phi(y)\,dx\,dy=
\displaystyle -\sum_{ijkh}\itwot u_k(x) \cdot \KK^{\frac12}_{ijkh}(y)\Phi_{ij}(y)\frac{\partial \varphi}{\partial x_h}(x)\,dx\,dy.
\ee

Assume henceforth that $u\in H^1(\Omega;\R^2)$. Then,  \eqref{eq:ident-s} implies that
$$
\itwot S(x,y) \cdot \Phi(y)\,\varphi(x) \,dx\,dy= \itwot \KK^{\frac12}(y)\nabla_x u(x) \cdot \Phi(y)\,\varphi(x) \,dx\,dy.
$$

By density, the result still holds with the test functions $\varphi(x)\Phi(y)$ replaced by the set of 
$\Psi(x,y)\in L^2(\Om;L^2_\sharp(\Y;\mn))$ 
such that
\[
{\rm div}_y\big(\KK^{\frac12}(y) \Psi(x,y)\big)=0\quad\mbox{in }\R^2,
\]
or equivalently, due to the symmetry of $\KK(y)$,
\[
\int_{\Om\times\Y}\Psi(x,y)\cdot\KK^{\frac12}(y)\nabla_yv(x,y)\,dy=0\quad\forall\,v\in L^2(\Om;H^1_\sharp(\Y;\R^2)).
\]
The $L^2(\Om;L^2_\sharp(\Y;\mn))$-orthogonal to that set is  the $L^2$-closure of
\[
\Ks_\nabla:=\left\{\KK^{\frac12}(y)\nabla_yv(x,y): v\in L^2(\Om;H^1_\sharp(\Y;\R^2))\right\}.
\]

Thus, 
$$
S(x,y)=\KK^{\frac12}(y)\nabla_x u(x)+\xi(x,y)
$$
for some $\xi$ in the closure of $\Ks_\nabla$ and there exists a sequence
\[
v_n\in L^2(\Om;H^1_\sharp(\Y;\R^2))
\]
such that $\KK^{\frac12}(y)\nabla_yv_n \to \xi$, strongly in $L^2(\Om;L^2_\sharp(\Y;\mn))$.

We now appeal to \cite[Proposition 1.6]{allaire92} which yields
\begin{multline}\label{2sc-lsc}
\liminf_{\e\to 0}\,\|\KK^{\frac12}(x/\e)\nabla u^\e\|^2_{L^2(\Om;\mn))}\ge \|S\|^2_{L^2(\OYRR)}=\\ \lim_n
\|\KK^{\frac12}(y)\nabla_x u(x)+\KK^{\frac12}(y)\nabla_yv_n\|^2_{L^2(\Omega\times\Y;\R^2)}.
\end{multline}
But recall that
$$
\|\KK^{\frac12}(x/\e)\nabla u^\e\|^2_{L^2(\Om;\mn))}=\iO \KK(x/\e)\nabla u^\e\cdot\nabla u^\e\,dx= \iO \LL(x/\e)\nabla u^\e\cdot\nabla u^\e\,dx
$$
because the determinant is a null Lagrangian.

Thus, from \eqref{2sc-lsc} and by weak $L^2$-lower semi-continuity of $\|u^\e\|_{L^2(\Om;\R^2)}$ we conclude that
\begin{multline}\label{eq:ga-liminf}
{\liminf_{\e\to 0}}\,{\mathscr I}^\e(u^\e)\ge
\\
\lim_n\itwot \KK(y)(\nabla_x u(x)+\nabla_yv_n(x,y))\cdot(\nabla_x u(x)+\nabla_yv_n(x,y))\,dx\,dy +\iO \!|u|^2\,dx\ge
\\
\inf\left\{ \itwot\!\!\!\KK(y)(\nabla_x u(x)+\nabla_yv(y))\cdot(\nabla_x u(x)+\nabla_yv(y))\,dx\,dy: v\in H^1_\sharp(\Y;\R^2)\right\}
\\
+\iO \!|u|^2\,dx.
\end{multline}
In the light of the definition \eqref{eq.hom-for} for $\LL^0$, we {finally get}
\[
{\liminf_{\e\to 0}}\,{\mathscr I}^\e(u^\e)\ge \iO \left\{\LL^0 \nabla_x u\cdot \nabla_x u+|u|^2\right\}\,dx
\]
provided that $u\in H^1(\Omega;\R^2)$, {hence, {\it a fortiori} provided that $u\in H^1_0(\Omega;\R^2)$}.

The proof of Theorem \ref{thm.main} is complete.

\bigskip

\subsection{{Proof of Theorem \ref{thm.mainbis}}}\label{sub.sec.proofbis}
Recall that, in the proof of Theorem \ref{thm.main}, we were at liberty to assume that
\[
{\mathscr I}^\e(u^\e)\le C<\infty,
\]
otherwise the $\Gamma$-$\liminf$ inequality is trivially verified. Consequently, if we can show that, under that condition, the target function $u$ is in $H^1_0(\OR)$, then we will be done as remarked at the onset of Subsection \ref{thmm.main}. Such will be the case except when dealing with straight layers (case (ii)) under the condition that $\theta=1/2$.
In that case we will have to show that, for those target fields $u$ that are not in $H^1_0(\OR)$, a recovery sequence for the $\Gamma\mbox{-}\limsup$ (in)equality can be obtained by density.

Returning to \eqref{eq:ident-s},  setting
\be{eq:rgeK}
N_{kh}:= \sum_{ij}  \int_{\Y} \KK^{\frac12}_{ijkh}(y)\Phi_{ij}(y)\,dy
\ee
and varying $\varphi$ in $C^\infty_c(\Omega)$, we conclude that
\be{eq:mnablau}
N\cdot \nabla u\in L^2(\Omega).
\ee

We now remark that $\KK(y)$, a symmetric mapping on  $\R^{2\times 2}$, has for  eigenvalues $2(\lambda(y)+\mu(y)+\mu_1)$, {$2\mu_1$} and $2(\mu(y)-\mu_1)$ with, if $\mu_2\neq 2\mu_1$, eigenspaces respectively generated by 
\par
\centerline{$I_2,\quad R^\perp$,}
\par\noindent
and, for the last eigenvalue, by
\par
\centerline{$G:=\left(\begin{smallmatrix}1&\ 0\\0&-1\end{smallmatrix}\right),\quad H:=\left(\begin{smallmatrix}0&1\\1&0\end{smallmatrix}\right)$.}
\par\noindent
Consequently, its kernel for $y\in Z_2$ is 
\be{eq:KerK2}
{\rm Ker}\,\big(\KK(y)\big)={\rm Ker}\,(\KK_2):=\left\{\gamma I_2,\; \gamma \mbox{ arbitrary in }\R\right\},
\ee
while its kernel for $y\in Z_1$ is
\be{eq:KerK1}
{\rm Ker}\,\big(\KK(y)\big)={\rm Ker}\,(\KK_1):=\left\{\left(\begin{smallmatrix}\alpha&\ \beta\\\beta&-\alpha\end{smallmatrix}\right):\alpha,\beta \mbox{ arbitrary in }\R\right\}.
\ee

\medskip

\noindent {\sf Step 1 -- Case (i).} First assume that $\bar{Z}_1\subset\mathring{\Y}$ and that $M\in \MS$.  We  then define 
\[
\Psi(y):=\frac12 \left\{My\cdot y-\ph(y-\kappa)M(y-\kappa)\cdot (y-\kappa)\right\}\quad\mbox{for any }y\in\Y+\kappa,\ \kappa\in\Z^2,
\]
with {$\ph\in C^2_c(\mathring{\Y})$,} $\ph\equiv 1$ in $Z_1$.
Then clearly, $\nabla\Psi-My \in H^1_{\sharp}(\Y;\R^2)$. Further, $\nabla \Psi=M\kappa$ in $Z_1+\kappa$ hence $\nabla^2\Psi\equiv0$ in $Z_1+\kappa$ while $\nabla^2\Psi R^\perp\in {\rm Ker}^\perp\,(\KK_2)$ thus belongs to the range of $\KK_2$ in $Z_2+\kappa$. 

It is thus meaningful to define $\Phi(y):=\KK^{-\frac12}(y)\big(\nabla^2 \Psi(y)\,R^\perp\big)$ where $\KK^{-\frac12}$ is the pseudo-inverse of~$\KK^{\frac12}$ (see the notation at the close of the introduction). 

We get
\[
\int_{\Y} \KK^{\frac12}(y)\Phi(y)\,dy=\int_{\Y} \nabla^2 \Psi(y)\,R^\perp\,dy= MR^\perp,
\]
while $\Phi$  satisfies \eqref{eq:Zxy} since
for any $v\in H^1_{\rm loc}(\R;\R^2)$ with periodic gradient,
\[
\div(\nabla v\,R^\perp)=0\quad\mbox{in }\R^2,
\]
or equivalently,
\[
\int_{\Y}\nabla v(y)\,R^\perp\cdot\nabla\psi(y)\,dy=0\quad\forall\,\psi\in H^1_\sharp(\Y;\R^2).
\]
We finally obtain {by \eqref{eq:mnablau}} that
$MR^\perp\cdot\nabla u\in L^2(\Omega)$. Since, when $M$ spans $\MS$, $N:=MR^\perp$ spans the set of all $2\times 2$ trace-free matrices , we infer from \eqref{eq:mnablau} that
\[
\ds\frac{\partial u_1}{\partial x_2},\; \frac{\partial u_2}{\partial x_1},\;\frac{\partial u_1}{\partial x_1}-\frac{\partial u_2}{\partial x_2} \mbox{ are in } L^2(\Omega).
\]
This is equivalent to stating that $\esym(R^\perp u)\in L^2(\Omega;\MS)$. Since $\Omega$ is Lipschitz, Korn's inequality allows us to conclude that $R^\perp u\in H^1(\OR)$, hence that 
\be{uinH1}
u\in H^1(\OR)
\ee 
in that case.

Since, for an arbitrary trace-free matrix  $N$, we can choose $\Phi$ constrained by \eqref{eq:Zxy} so that \eqref{eq:rgeK} is satisfied, then actually  
\be{eq:uninH10}
u\in H^1_0(\Omega;\R^2).
\ee
Indeed, take $x^0\in \partial\Omega$ to be a Lebesgue point for $u\lfloor\partial\Omega$ -- which lies in particular in $L^2_{\h}(\partial\Omega;\R^2)$ -- as well as for $\nu(x^0)$, the exterior normal to $\Omega$ at $x^0$. Then take an arbitrary trace-free $N$ and the associated $\Phi$.

By \eqref{uinH1} we already know that $u\in H^1(\Omega;\R^2)$, so that \eqref{eq:ident-s} reads as
\begin{multline*}
\itwot\varphi(x) S(x,y) \cdot \Phi(y)\,dx\,dy=
\\
\int_\Omega N\cdot \nabla u(x)\,\varphi(x)\,dx - \!\int_{\partial\Omega} N\nu(x)\cdot u(x)\,\varphi(x)\,d\h.
\end{multline*}
But,  taking first $\varphi \in C^\infty_c(\Omega)$ and remarking that, in such a case, the first two integrals are equal and bounded by a constant times $\|\ph\|_{L^2(\Om)}$, we immediately conclude that, for any $\varphi\in C^\infty(\Omb)$,
$$
\int_{\partial\Omega} N\nu(x)\cdot u(x)\,\varphi(x)\,d\h=0.
$$
Thus, $N\nu(x)\cdot u(x)=0$  $\h$-a.e. on $\partial \Omega$, hence, since $x^0$ is a Lebesgue point, $N\nu(x^0)\cdot u(x^0)=0$ from which it is immediately concluded that  $u_i(x^0)=0,\; i=1,2$, hence \eqref{eq:uninH10}.

But, in such a case we can apply Theorem \ref{thm.main} which thus delivers the $\Gamma$-limit.

\medskip

\noindent {\sf Step 2 -- Case (ii).} Assume now that $Z_1$ is a straight layer, that is that there exists $0<\theta<1$ such that 
$(0,\theta)\times (0,1)= Z_1\cap\mathring \Y$. 

For an arbitrary matrix $M\in \R^{2\times 2}$ define $v(y)$ as
\[
v(y) := My + \left(\int_0^{y_1} [\chi(t)-\theta]\,dt\right) \xi, \; \xi\in \R^2,
\]
where $\chi(y)$ is the characteristic function of phase 1 with volume fraction
$$
\iYt \chi(y)\ dy:=\theta.
$$ 
Then,   $v(y) - My$ is $Y_2$-periodic and
\[
\nabla v(y) = \chi(y_1) (M+(1-\theta)\xi\otimes e_1) + (1-\chi(y_1)) (M-\theta\xi\otimes e_1).
\]
According to \eqref{eq:KerK1}, \eqref{eq:KerK2}, for $\nabla v(y)R^\perp$ to be in the {range} of $\KK(y)$ we must have both
\[
(M+ (1-\theta)\xi\otimes e_1)R^\perp \cdot G = (M+(1-\theta)\xi\otimes e_1)R^\perp \cdot H = 0
\]
{\it i.e.,} $(1-\theta)\xi_1 = M_{22} - M_{11}$, $(1-\theta)\xi_2 = -M_{12} - M_{21}$
and
\[
(M-\theta\xi\otimes e_1)R^\perp \cdot I_2 = 0,
\]
{\it i.e.,} $\theta\xi_2 = -M_{12} + M_{21}$.

Since  $\xi$ can be  arbitrary, this imposes as sole condition on $M$ that 
$$(2\theta-1)M_{12}+M_{21} = 0.$$
Then,
 \[
 \iYt\nabla v(y) R^\perp\ dy= MR^\perp= \begin{pmatrix} M_{12} & -M_{11} \\ M_{22} & (2\theta-1)\,M_{12}\end{pmatrix} = N_\theta
 \]
 where 
\[
 N_\theta=\left(\begin{array}{cc} a&\; c\\[2mm]b& (2\theta -1)a\end{array}\right), \;\mbox{ with }a,b,c \mbox{ arbitrary.}
\]

In view of \eqref{eq:mnablau}, we obtain that
\[
\frac{\partial u_1}{\partial x_2},\ \frac{\partial u_2}{\partial x_1},\ \frac{\partial u_1}{\partial x_1}+{(2\theta-1)}\,\frac{\partial u_2}{\partial x_2}\mbox{ are in }L^2(\Om),
\]
or equivalently when $\theta\ne1/2$,
\[
{\mathbf E}(P_\theta u)\in L^2(\Om;\R^{2\times 2})\quad\mbox{with}\quad P_\theta:=\begin{pmatrix} 0 & {2\theta-1} \\ 1 & 0 \end{pmatrix}.
\]
Using Korn's inequality once again, we thus conclude that $u\in H^1(\Om;\R^2)$, except  when $\theta=1/2$ in which case $\ds\frac{\partial u_2}{\partial x_2}$ might not be in $L^2(\Om)$.

 \begin{remark}\label{gen.char.m22=0}
 Actually, when $\theta=1/2$, then all $\Phi$'s that are such that \eqref{eq:Zxy} is satisfied produce, through \eqref{eq:rgeK}, a matrix $M$ with $M_{21}=0$, hence a matrix $N:=MR^\perp$ such that $N_{22}=0$. 
 
 Indeed, the existence of $\Phi$ is equivalent to that of $v(y)=My+w(y)$ with $w\in H^1_\sharp(\Y;\R^2)$ such that
 $$
 \nabla v(y) R^\perp= \KK^{\frac 12}(y_1)\Phi(y)
 $$
 which implies that, for a.e. $y_1\in (0,1)$,
 $$
 \KK^{\frac 12}(y_1)\int_0^1\Phi(y_1,t)\ dt=\int_0^1\nabla v(y_1,t)R^\perp \ dt.
 $$
 In view of \eqref{eq:KerK1}, \eqref{eq:KerK2}, the last relation yield in particular that 
 $$
 \begin{cases}
 \ds v_1(y_1,1)-v_1(y_1,0)=- \int_0^1\frac{\partial v_2}{\partial y_1}(y_1,y_2)\ dy_2,\; 0\le y_1\le 1/2\\[3mm]
 \ds v_1(y_1,1)-v_1(y_1,0)=+ \int_0^1\frac{\partial v_2}{\partial y_1}(y_1,y_2)\ dy_2,\; 1/2\le y_1\le 1,
 \end{cases}
 $$
 or still, since $v(y)=My+w(y)$ with $w$ $\Y$-periodic,
 $$
 \begin{cases}
 \ds M_{12}=- \int_0^1\frac{\partial v_2}{\partial y_1}(y_1,y_2)\ dy_2,\; 0\le y_1\le 1/2\\[3mm]
 \ds M_{12}=+ \int_0^1\frac{\partial v_2}{\partial y_1}(y_1,y_2)\ dy_2,\; 1/2\le y_1\le 1,
 \end{cases}
 $$
 But then
 $$
 M_{21}=\int_\Y \frac{\partial v_2}{\partial y_1}(y_1,y_2)\ dy_1dy_2=0.
 $$
\vskip-1cm\hfill\P \end{remark}

\vskip1cm
 
 Then, through an argument identical to that used in case (i), we find that, for $x^0$  Lebesgue point for $u_1{\lfloor \partial\Om}$ (and for $u_2{\lfloor \partial\Om}$ as well if $\theta\ne 1/2$), $u_1(x^0)=0$ (and $u_2(x^0)=0$ if $\theta\ne 1/2$) while, if $\theta=1/2$, $u_2\,\nu_1$, which is well defined as an element of $H^{-\frac12}(\Om)$, satisfies $u_2\,\nu_1=0$.

 \medskip
 So, here again, we can apply Theorem \ref{thm.main} provided that $\theta\ne 1/2$. It thus remains to compute the $\Gamma$-limit in case (ii) when $\theta=1/2$.  This is the object of the last step below.
 
 \medskip
 \noindent {\sf Step 3 -- Identification of the $\Gamma$-limit -- case (ii) -- $\theta=1/2$; the Gutiérrez case.}
As far as the $\Gamma\mbox{-}\limsup$ inequality is concerned there is nothing to prove once again, because, as already stated at the onset of Subsection \ref{thmm.main} we know the existence of a recovery sequence for any target field $u\in H^1_0(\OR)$. But, according to Remark \ref{structureX}, $H^1_0(\OR)$ is {\em a fortiori} dense in $\XX$. So any element $u\in \XX$ can be in turn viewed as the limit in the topology induced by the inner product $\langle,\rangle_\XX$ of a sequence $u^p\in H^1_0(\OR)$. Since, as noted in Remark \ref{cross-terms},
${\partial u^p_2}/{\partial x_2}$ does not enter the expression 
\[
\int_\Om \LL^0 \nabla u^p\cdot\nabla u^p\ dx,
\]
we immediately get that
\[
\lim_p\int_\Om \LL^0 \nabla u^p\cdot\nabla u^p\ dx= 
\int_\Om \LL^0 \nabla u\cdot\nabla u\ dx.
\]
A diagonalization process concludes the argument.

\medskip
Consider now, for $u\in \XX$, a sequence $u^\e\in H^1_0(\OR)$ such that $u^\e\rightharpoonup u$ weakly in $L^2(\OR)$.  We revisit Step 2 in the proof of Theorem \ref{thm.main} in Subsection \ref{thmm.main}, taking into account Remark \ref{gen.char.m22=0}. Since, because of that remark,
$N_{22}=0$, \eqref{eq:ident-s} now reads as
$$
\displaystyle \itwot\varphi(x) S(x,y) \cdot \Phi(y)\,dx\,dy=
\displaystyle -\hskip-1cm \sum_{\quad\quad{\text{\tiny $\begin{cases}ijkh\\(k,h)\!\ne\!(2,\!2)\end{cases}$}}}\hskip-.7cm\itwot u_k(x)\, \KK^{\frac12}_{ijkh}(y)\Phi_{ij}(y)\frac{\partial \varphi}{\partial x_h}(x)\,dx\,dy.
$$
The rest of the argument goes through exactly as in Step 2, yielding, in lieu of~\eqref{eq:ga-liminf},
\begin{multline}\label{eq:ga-liminf-bis}
{\liminf_{\e\to 0}}\,{\mathscr I}^\e(u^\e)\ge
\\
\inf\left\{ \itwot\!\!\!\KK(y)(\nabla^\prime_x u(x)+\nabla_yv(y))\cdot(\nabla^\prime_x u(x)+\nabla_yv(y))\,dx\,dy: v\in H^1_\sharp(\Y;\R^2)\right\}
\\
+\iO \!|u|^2\,dx\\ = \inf\!\left\{ \itwot\!\!\!\LL(y)(\nabla^\prime_x u(x)+\nabla_yv(y))\!\cdot\!(\nabla^\prime_x u(x)+\nabla_yv(y))\,dx\,dy: v\in H^1_\sharp(\Y;\R^2)\right\}\\+4\mu_1\int_\Om \det \nabla^\prime_x u \ dx +\iO |u|^2\,dx
\\=\int_\Om \LL^0 \nabla^\prime_x u\!\cdot\! \nabla^\prime_x u\ dx+4\mu_1\int_\Om \det \nabla^\prime_x u \ dx +\iO |u|^2\,dx
\end{multline}
with
$\ds\nabla^\prime_x u:=\nabla_x u- \frac{\partial u_2}{\partial x_2} e_2\otimes e_2.$

It now suffices to remark that, in this specific setting and because $-\lambda_2-\mu_2=\mu_1$,
 the precise expression for $\LL^0$ in the basis $(e_1,e_2)$ is as follows (see~\cite{gutierrez99}):
\begin{align}\label{eq:values}
\ds\LL^0_{1111}=\frac2{\ds\frac 1{\lambda_1+2\mu_1}+\frac 1{\lambda_2+2\mu_2}}=:\bl+2\bm_1,
\\[5mm]\nonumber
\ds\LL^0_{1212}=\LL^0_{1221}=\LL^0_{2112}=\LL^0_{2121}=\frac {2\mu_1\mu_2}{\mu_1+\mu_2}=:\bm_2,
\\[5mm]\nonumber
\ds \LL^0_{1122}=\LL^0_{2211}=\frac{\ds\frac {\lambda_1}{\lambda_1+2\mu_1}+\frac {\lambda_2}{\lambda_2+2\mu_2}}{\ds\frac 1{\lambda_1+2\mu_1}+\frac 1{\lambda_2+2\mu_2}}=-2\mu_1=:\bl,
\\[5mm]\nonumber
\LL^0_{1112}=\LL^0_{1121}=\LL^0_{2111}= \LL^0_{1211}=0,
\\[3mm]\nonumber
 \LL^0_{1222}=\LL^0_{2122}=\LL^0_{2212}=\LL^0_{2221}=0\quad\mbox{and}\quad\LL^0_{2222}=0.
\end{align}
Consequently, recalling Remark \ref{cross-terms},
\begin{multline*}
\int_\Om \LL^0 \nabla_x u\cdot \nabla_x u\ dx=\int_\Om \LL^0 \nabla^\prime_x u\cdot \nabla^\prime_x u\ dx+ 2\ \LL^0_{1122}\int_\Om\frac{\partial u_1}{\partial x_2}\frac{\partial u_2}{\partial x_1}\ dx\\=
\int_\Om \LL^0 \nabla^\prime_x u\cdot \nabla^\prime_x u\ dx+4\mu_1\int_\Om \det \nabla^\prime_x u\ dx.
\end{multline*}
which, in view of \eqref{eq:ga-liminf-bis}, proves the $\Gamma\mbox{-}\liminf$ inequality in the Gutiérrez case.

\begin{remark}
For future reference, we name the material obtained in \eqref{eq:values} the Gutiérrez material and observe that it can be labeled 2D orthorombic since it is invariant under symmetry about the two lines $x_1=0$ and $x_2=0$.\hfill\P
\end{remark}

\subsection{A corollary of Theorem \ref{thm.mainbis}} We conclude this section with a corollary of Theorem \ref{thm.mainbis} that will play an essential role in Section \ref{concl} below.

\begin{corollary}\label{thms.cor}
In the setting of Theorem  \ref{thm.mainbis}, consider the following sequence of functionals
\[
{{\mathscr G}}^\e(v):=\left\{
\begin{array}{cl}
\displaystyle\!\!\iO\!\!\big\{\LL(x/\e) \nabla v\cdot\nabla v+a(x/\e)|v|^2 - b(x/\e) f\cdot v\big\}\,dx, & v\in H^1_0(\OR)
\\[3mm]
\infty, & \mbox{else}.
\end{array}\right.
\]
where $f\in L^2(\OR)$, $a,b\in L^\infty_\sharp(Y_2;\R)$  and  $a(y)\ge\alpha$ a.e. in $\R^2$ for some $\alpha>0$.

Then the results of those theorems still hold true upon replacing ${\mathscr I}^\e$ by ${\mathscr G}^\e$ and ${\mathscr I}^{0}, \mbox{ resp. } {\mathscr I}^{1/2},$ by ${\mathscr G}^{0}, \mbox{ resp. } {\mathscr G}^{1/2}$,  defined by
\[
 {\mathscr G}^0(v):=\left\{\begin{array}{cl}
 \displaystyle\iO \big\{\LL^0 \nabla v\cdot\nabla v+\bar a|v|^2-\bar b f\cdot v\big\}\,dx, & v\in H^1_0(\OR)
 \\[2mm]
 \infty, & \mbox{else},
 \end{array}\right.
\]
and, under the additional assumption that $\LL^0_{2222}=0,$
\[
 {\mathscr G}^{1/2}(v):=\left\{\begin{array}{cl}
 \displaystyle\iO \big\{\LL^0 \nabla v\cdot\nabla v+\bar a|v|^2-\bar b f\cdot v\big\}\,dx, & \ds v\in \XX
 \\[2mm]
 \infty, & \mbox{else},
 \end{array}\right.
\]
where $\bar a:= \int_\Y a(y)\ dy$ (idem for $\bar b$).
\end{corollary}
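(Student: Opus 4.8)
The plan is to reduce Corollary \ref{thms.cor} to the already-established Theorems \ref{thm.main} and \ref{thm.mainbis} by treating the three added/modified terms — the oscillating coefficient $a(x/\e)$ in the zeroth-order term, the oscillating coefficient $b(x/\e)$ in the force term, and the linear force term itself — as continuous perturbations of the functional ${\mathscr I}^\e$. First I would observe that since $a\in L^\infty_\sharp(Y_2;\R)$, the quadratic form $v\mapsto \iO a(x/\e)|v|^2\,dx$ is continuous for the \emph{strong} $L^2(\OR)$ topology, and in fact, whenever $v^\e\wk v$ weakly in $L^2(\OR)$ and additionally $v^\e\to v$ strongly in $L^2(\OR)$ (which is exactly what a bound ${\mathscr I}^\e(v^\e)\le C$ delivers through the $H^1_0$ coercivity of the Dirichlet part combined with Rellich compactness, at least along a subsequence), one has $\iO a(x/\e)|v^\e|^2\,dx\to\bar a\iO|v|^2\,dx$ by the standard weak-$\star$ convergence $a(x/\e)\wkst\bar a$ in $L^\infty$. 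The same reasoning handles $\iO b(x/\e)f\cdot v^\e\,dx\to\bar b\iO f\cdot v\,dx$, this one being even easier since $b(x/\e)f\wk \bar b f$ weakly in $L^2(\OR)$ (product of a weakly-$\star$ convergent $L^\infty$ sequence with a fixed $L^2$ function) against $v^\e\wk v$ — actually here one must again use strong convergence of $v^\e$ or, alternatively, weak convergence of $b(x/\e)f$ together with, say, the fact that $v^\e$ is bounded; the cleanest route is strong $L^2$ convergence of $v^\e$.

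The argument then splits as in Theorem \ref{thm.mainbis}. For the $\Gamma\mbox{-}\liminf$ inequality, I take $u^\e\wk u$ in $L^2(\OR)$ with $\liminf{\mathscr G}^\e(u^\e)$ a finite limit along a subsequence. Because $a\ge\alpha>0$ and because the linear term $\iO b(x/\e)f\cdot u^\e\,dx$ is controlled by $\|f\|_{L^2}\|u^\e\|_{L^2}\|b\|_{L^\infty}$, a Young-inequality absorption shows that ${\mathscr G}^\e(u^\e)\le C$ forces a uniform bound on $\iO\LL(x/\e)\nabla u^\e\cdot\nabla u^\e\,dx$ — more precisely on the non-negative rewriting $\iO\KK(x/\e)\nabla u^\e\cdot\nabla u^\e\,dx$ from \eqref{form.with.K} — and on $\|u^\e\|_{L^2}$; combined with the null-Lagrangian identity this yields an $H^1_0$ bound, hence (Rellich) strong $L^2$ convergence $u^\e\to u$ along a further subsequence. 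On that subsequence the two lower-order terms converge \emph{exactly} to their limits by the first paragraph, so $\liminf{\mathscr G}^\e(u^\e)=\liminf\left({\mathscr I}^\e(u^\e)-\iO|u^\e|^2\,dx+\iO a(x/\e)|u^\e|^2\,dx-\iO b(x/\e)f\cdot u^\e\,dx\right)\ge {\mathscr I}(u)-\iO|u|^2\,dx+\bar a\iO|u|^2\,dx-\bar b\iO f\cdot u\,dx$, which is precisely ${\mathscr G}^0(u)$ (or ${\mathscr G}^{1/2}(u)$ in the Guti\'errez case), the membership $u\in H^1_0(\OR)$ (resp. $u\in\XX$) being inherited verbatim from the corresponding step of Theorem \ref{thm.mainbis} since that membership argument used only the quadratic Dirichlet part through \eqref{eq:mnablau}, which is untouched here.

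For the $\Gamma\mbox{-}\limsup$ inequality, I reuse the recovery sequences already constructed for ${\mathscr I}^\e$: given a target $u$ in $H^1_0(\OR)$ (resp. in $\XX$, reached by density from $H^1_0$ exactly as in Step 3 of the proof of Theorem \ref{thm.mainbis}), Theorems \ref{thm.main}/\ref{thm.mainbis} furnish $u^\e\in H^1_0(\OR)$ with $u^\e\wk u$ in $L^2$ and $\limsup{\mathscr I}^\e(u^\e)\le {\mathscr I}(u)$; standard $\Gamma$-convergence lore plus the coercivity bound shows such a recovery sequence can be taken with $\iO\LL(x/\e)\nabla u^\e\cdot\nabla u^\e\,dx$ bounded, hence $u^\e\to u$ strongly in $L^2$, so again the lower-order terms pass to the correct limits and $\limsup{\mathscr G}^\e(u^\e)\le {\mathscr G}^0(u)$ (resp. ${\mathscr G}^{1/2}(u)$). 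I do not expect any genuinely hard point here: the one place requiring a little care is confirming that the recovery-sequence strong $L^2$ convergence is legitimate — it is, because the $H^1_0$-$\Gamma\mbox{-}\limsup$ of Corollary \ref{cor} provides a recovery sequence bounded in $H^1_0$ — and that the density argument of Step 3 still applies once the force term is present, which it does since $v\mapsto\bar b\iO f\cdot v\,dx$ is continuous on $\XX$. Everything else is the bookkeeping of splitting a functional into a $\Gamma$-convergent part plus a continuously convergent part.
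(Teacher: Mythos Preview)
Your $\Gamma\mbox{-}\limsup$ argument is fine and matches the paper's. The gap is in your $\Gamma\mbox{-}\liminf$: you assert that a bound ${\mathscr G}^\e(u^\e)\le C$ yields, via the null-Lagrangian rewriting, a uniform $H^1_0$ bound on $u^\e$ and hence strong $L^2$ convergence by Rellich. This is precisely what is \emph{not} available here. The coercivity constant $\alpha_\e$ in Lemma~\ref{lem.coer} depends on $\e$ and is not shown to be bounded away from zero; equivalently, $\KK(y)$ has nontrivial kernels \eqref{eq:KerK1}, \eqref{eq:KerK2}, so a bound on $\iO\KK(x/\e)\nabla u^\e\cdot\nabla u^\e\,dx$ does not control $\|\nabla u^\e\|_{L^2}$ uniformly in $\e$. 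The entire point of passing to the weak $L^2$ topology in Theorems~\ref{thm.main} and~\ref{thm.mainbis} is the absence of such a uniform gradient bound. Without it you cannot upgrade $u^\e\wk u$ to $u^\e\to u$ strongly, and then neither the quadratic term $\iO a(x/\e)|u^\e|^2\,dx$ nor the linear term $\iO b(x/\e)f\cdot u^\e\,dx$ (a pairing of two merely weakly convergent sequences) passes to the limit by your argument.

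The paper avoids this by reusing the two-scale fact \eqref{uo-indt-y} already established in the proof of Theorem~\ref{thm.main}: the two-scale limit of $u^\e$ does not oscillate in $y$, so $a(x/\e)u^\e\twosc a(y)u(x)$ and hence $a(x/\e)u^\e\wk\bar a\,u$ weakly in $L^2$ (and similarly for $b$). This handles the linear term directly against the fixed $f$, while for the quadratic term one expands $0\le\liminf_\e\iO a(x/\e)|u^\e-u|^2\,dx$ to get $\liminf_\e\iO a(x/\e)|u^\e|^2\,dx\ge\bar a\iO|u|^2\,dx$, which is all that is needed for the $\Gamma\mbox{-}\liminf$ inequality.
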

\begin{proof}
Once again the $\Gamma\mbox{-}\limsup$ inequality is straightforward since, for any $u\in H^1_0(\Om;\R^2)$, or in $\XX$ depending on the case considered, the recovery sequences $\{u^\e\}_\e$ are bounded in $H^1_0(\Om;\R^2)$, so that Rellich's theorem permits one to pass to the limit in the zeroth order term. We obtain
$$
\lim_\e \int_\Om a(x/\e) |u^\e|^2\ dx= \bar a \int_\Om  |u|^2\ dx,\; \lim_\e \int_\Om b(x/\e) f\cdot u^\e\ dx= \bar b \int_\Om  f\cdot v\ dx.
$$
As far as the $\Gamma\mbox{-}\liminf$ inequality is concerned, we still have \eqref{uo-indt-y}, that is that, if a sequence  $\{u^\e\}_\e$ converges weakly in $L^2(\OR)$ to $u\in L^2(\OR)$ and two-scale converges to $u^0(x,y)$, then $u^0$ does not depend on $y$. In other words,  $u^0(x,y)=u(x)$. But then $$
\begin{cases}a(x/\e)u^\e \twosc  a(y) u(x)\\[2mm]
b(x/\e)u^\e \twosc  b(y) u(x)
\end{cases}$$ 
which implies in turn that
$$
\begin{cases}a(x/\e)u^\e \wk \bar a\,u
\\[2mm]
b(x/\e)u^\e \wk \bar b\,u\end{cases}\;\mbox{ weakly in } L^2(\Om;\R^2). 
$$
Consequently passing to the limit in the linear term is immediate while, as far as the quadratic zeroth order term is concerned it suffices to remark that
\begin{multline*}
0\le \liminf_\e\int_\Om a(x/\e)|u^\e-u|^2\ dx= \liminf_\e \int_\Om a(x/\e)|u^\e|^2\ dx-\\[2mm]2 \lim_\e\int_\Om a(x/\e)u^\e\cdot u\ dx + \lim_\e \int_\Om a(x/\e) |u|^2\ dx=\\[2mm] \liminf_\e \int_\Om a(x/\e)|u^\e|^2\ dx-\int_\Om \bar a |u|^2\ dx.
\end{multline*}
With the above inequality at hand, the rest of the proof remains unchanged.
 \end{proof}

\bigskip

\medskip

\section{Elasto-dynamics}\label{concl}
In this last section our  goal is to investigate the impact of Theorem \ref{thm.mainbis} (or rather of Corollary \ref{thms.cor}) on wave propagation with a particular emphasis on the Gutiérrez setting (case (ii) of Theorem \ref{thm.mainbis} with $\theta=1/2$) because of the loss of (strict) strong ellipticity ($\LL^0_{2222}=0$)  demonstrated there. 

\subsection{Convergence of minimizers}\label{sub.minim}
{The main purpose of $\Gamma$-convergence is to ensure the convergence of minimizers. In this respect, consider the $\e$-indexed sequence of functionals 
\[
\displaystyle {\mathscr I}^\e(v)-2\int_\Omega f\cdot v\,dx
\]
where {\em e.g.} $f\in L^2(\OR)$ is a given force load. For a fixed $\e$, we first note in  Lemma \ref{lem.coer} below that $H^1_0$-coercivity holds true.
\begin{lemma}\label{lem.coer}  Under assumptions \eqref{hyp-ph}, \eqref{hyp-lam-mu},
there exists a  a sequence $\{\alpha_\e>0\}_\e$ such that
\[
{\mathscr I}^\e(v) \ge \alpha_\e \int_\Om\left\{|v|^2+|\nabla v|^2\right\}dx,\; v\in H^1_0(\OR).
\]
\end{lemma}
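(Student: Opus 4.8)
The plan is to prove the coercivity estimate for each fixed $\e$ by reducing it, via rescaling, to a statement about the fixed heterogeneous tensor $\LL(y)$ on a bounded domain, and then invoking the ellipticity hypothesis together with a compactness/contradiction argument. First I would observe that $\LL(x/\e)\nabla v\cdot\nabla v \ge \ase(\LL)\,|\nabla v|^2$ would be false in general because $\ase$ controls only rank-one directions, not the full quadratic form; so the naive pointwise bound is unavailable. Instead the key is that for a \emph{fixed} $\e$ the functional $v\mapsto \iO \LL(x/\e)\nabla v\cdot\nabla v\,dx$ is, by Theorem \ref{thm.lambda} (which gives $\Lambda(\LL)\ge 0$) together with G{\aa}rding-type reasoning, bounded below on $H^1_0(\OR)$, and the added term $\iO|v|^2\,dx$ upgrades this to genuine coercivity.

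Concretely, I would argue as follows. Rescale: with $v\in H^1_0(\Om;\R^2)$, set $\Om_\e := \e^{-1}\Om$ and $w(y):=v(\e y)$, so that $w\in H^1_0(\Om_\e;\R^2)$ and $\iO \LL(x/\e)\nabla v\cdot\nabla v\,dx = \e^{\,0}\!\int_{\Om_\e}\LL(y)\nabla w\cdot\nabla w\,dy$ up to the appropriate power of $\e$ (which cancels in the inequality since the $|v|^2$ and $|\nabla v|^2$ terms scale the same way after tracking $dx = \e^2 dy$ and $\nabla_x = \e^{-1}\nabla_y$). The claim then reduces to: there exists $\alpha>0$, depending on $\Om_\e$ (hence on $\e$), with $\int_{\Om_\e}\LL(y)\nabla w\cdot\nabla w\,dy + \int_{\Om_\e}|w|^2\,dy \ge \alpha\int_{\Om_\e}\{|w|^2+|\nabla w|^2\}\,dy$ for all $w\in H^1_0(\Om_\e;\R^2)$. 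Suppose not: then there is a sequence $w_n\in H^1_0(\Om_\e;\R^2)$ with $\int\{|w_n|^2+|\nabla w_n|^2\}=1$ and $\int \LL(y)\nabla w_n\cdot\nabla w_n + \int|w_n|^2 \to 0$. Up to a subsequence $w_n\wk w$ in $H^1_0$ and $w_n\to w$ strongly in $L^2$ by Rellich; the $L^2$ part forces $w=0$, hence $\int|\nabla w_n|^2\to 1$. Now I use the null-Lagrangian trick from the introduction: $\iO\LL(y)\nabla w_n\cdot\nabla w_n\,dy = \iO \KK(y)\nabla w_n\cdot\nabla w_n\,dy$ (the $\det$ term integrates to zero on $H^1_0$), and $\KK(y)\ge 0$ as a quadratic form by \eqref{K-nonneg}. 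Therefore $0 \le \iO\KK(y)\nabla w_n\cdot\nabla w_n\,dy = \iO\LL(y)\nabla w_n\cdot\nabla w_n\,dy \to 0$, so $\KK^{1/2}(y)\nabla w_n\to 0$ strongly in $L^2$. Combined with $\nabla w_n\wk 0$ and the structure of $\Ker \KK_j$ (namely \eqref{eq:KerK1}, \eqref{eq:KerK2}), one extracts that the trace-free / deviatoric combinations of $\nabla w_n$ that are controlled by $\KK$ tend to zero strongly, and a Korn-type argument (as used in Steps 1--2 of the proof of Theorem \ref{thm.mainbis}) upgrades this to $\nabla w_n\to 0$ strongly in $L^2$, contradicting $\int|\nabla w_n|^2\to 1$.

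The main obstacle I expect is the last implication: passing from ``$\KK^{1/2}(y)\nabla w_n\to 0$ strongly and $w_n\wk 0$'' to ``$\nabla w_n\to 0$ strongly''. Because $\KK$ degenerates (its kernel is two-dimensional in phase 1 and one-dimensional in phase 2), controlling $\KK^{1/2}\nabla w_n$ does not immediately control all of $\nabla w_n$; one must exploit the \emph{geometry} — that phase 2 percolates (the unbounded component $X_2^\#$ of \eqref{hyp-ph}) and that the two phases are glued along $C^2$ interfaces — exactly as in the two-scale analysis of Step 1 of Subsection \ref{thmm.main}. In effect the same mechanism that forced $u^0$ not to oscillate there (interface transmission conditions \eqref{bc-Phi}, connectedness, elliptic regularity for the harmonic potential $\zeta_x$) here forces $\nabla w_n\to 0$. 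Alternatively, and perhaps more cleanly, one may avoid the contradiction argument entirely and cite \cite{francfort92}: since $\Lambda(\LL)\ge0$ by Theorem \ref{thm.lambda}, the bilinear form $v\mapsto\iO\LL(x/\e)\nabla v\cdot\nabla v\,dx + \iO|v|^2\,dx$ satisfies a G{\aa}rding inequality, and on a bounded domain with Dirichlet data a G{\aa}rding inequality plus injectivity (the zeroth-order term kills the kernel) yields coercivity with an $\e$-dependent constant by the usual Peetre–Tartar lemma. I would present the contradiction-compactness version as the primary proof, since it is self-contained given the earlier material, and remark that the constant $\alpha_\e$ genuinely degenerates as $\e\to0$ — which is precisely why the $\Gamma$-convergence result of Theorem \ref{thm.mainbis}, not this lemma, is what carries real content about the limit problem.
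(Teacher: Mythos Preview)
Your overall architecture --- contradiction, normalization, Rellich to kill the $L^2$ part, null Lagrangian to pass from $\LL$ to $\KK$, then $\KK^{1/2}(y)\nabla w_n\to 0$ strongly --- matches the paper's exactly. The rescaling to $\Om_\e$ is unnecessary (the paper works directly on $\Om$ at fixed $\e$), and your parenthetical claim that the $|v|^2$ and $|\nabla v|^2$ terms scale the same way under $y=x/\e$ is wrong: the $L^2$ norm picks up an extra $\e^{-2}$. This is harmless since the detour is avoidable, but you should drop it.

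The genuine gap is your mechanism for upgrading $\KK^{1/2}\nabla w_n\to 0$ to $\nabla w_n\to 0$ in phase~1. You point to the transmission conditions \eqref{bc-Phi}, connectedness of $X_2^\#$, and the harmonic-potential argument for $\zeta_x$. Those tools are tailored to the two-scale setting: they apply to a $y$-periodic function $u^0(x,\cdot)$ satisfying $\KK_j\nabla_y u^0=0$ \emph{exactly}, which yields genuine harmonicity and allows Liouville/boundary arguments. Here you only have a sequence on a fixed bounded domain with $\KK\nabla w_n\to 0$ approximately, no periodicity, and no exact PDE for a potential --- that machinery does not transfer. The paper's actual device is much more elementary and you have essentially already set it up: in phase~2, $\KK_2$ controls $\esym(R^\perp w_n)$, so Korn on $\R^\e_2$ (legitimate by \eqref{hyp-ph}) plus $w_n\to 0$ in $L^2$ gives $\nabla w_n\to 0$ strongly there. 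Then, since $\det\nabla w_n$ is a null Lagrangian on all of $\R^2$ (extend by zero), $\int_{\R^\e_1}\det\nabla w_n\to 0$ as well. Now use the pointwise algebraic identity in $2\times 2$,
\[
(\div w)^2+(\mathrm{curl}\,w)^2=|\nabla w|^2+2\det\nabla w,
\]
to convert the phase-1 information $\int_{\R^\e_1}\big[(\div w_n)^2+(\mathrm{curl}\,w_n)^2\big]\to 0$ (which is exactly what $\KK_1^{1/2}\nabla w_n\to 0$ says) into $\int_{\R^\e_1}|\nabla w_n|^2\to 0$. This second use of the determinant as a null Lagrangian is the step your proposal is missing.
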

\begin{proof}
Assume that the conclusion does not hold. Then there exists a sequence $v_n\in H^1_0(\OR)$ such that
\be{coer1}
\int_\Om\left\{|v_n|^2+|\nabla v_n|^2\right\}dx=1,
\ee
while
\be{coer2}
{\lim_n{\mathscr I}^\e(v_n)= 0.}
\ee
We can always extend the sequence $v_n$ to $H^1(\R^2;\R^2)$ by setting $v_n\equiv 0$ outside $\Om$. 
In view of \eqref{form.with.K}, \eqref{K-nonneg}, convergence \eqref{coer2} implies in particular that 
\be{coer.L2conv}
v_n\to 0\mbox{ in }L^2(\R^2;\R^2).
\ee
 Further,  the explicit expressions for $\KK_i$ imply that 
\be{coer3.2}
\int_{\R^\e_2} \left[\left(\frac{\partial v^n_1}{\partial x_2}\right)^2+\left(\frac{\partial v^n_2}{\partial x_1}\right)^2+\left(\frac{\partial v^n_1}{\partial x_1}-\frac{\partial v^n_2}{\partial x_2}\right)^2\right]dx \to 0,
\ee
where $\R^\e_2:=\R^2\cap \e Z^\#_2$, while
\be{coer3.1}
\int_{\R^\e_1} \left[\left(\frac{\partial v^n_1}{\partial x_1}+\frac{\partial v^n_2}{\partial x_2}\right)^2
+\left(\frac{\partial v^n_1}{\partial x_2}-\frac{\partial v^n_2}{\partial x_1}\right)^2\right]dx \to 0.
\ee
where $\R^\e_1:=\Om\cap \e Z^\#_1$.

But, as remarked before in  the proof of Step 1 - Case (i) in Subsection \ref{sub.sec.proofbis}, \eqref{coer3.2} is equivalent to stating that $\esym(R^\perp v_n)\to 0$ strongly in $L^2(\R^\e_2;\MS)$.
Because of assumptions \eqref{hyp-ph},    Korn's inequality applies to $\R^\e_2$ and thus   we conclude, with the additional help of \eqref{coer.L2conv}, that $R^\perp v_n\to 0$ strongly in $H^1(\R^\e_2;\R^2)$, hence that 
\be{coer.comv.ph2}
v_n \to 0, \mbox{ strongly in } H^1(\R^\e_2;\R^2).
\ee
Now the determinant is a null Lagrangian, so 
\[
\int_{\R^2} \det \nabla v_n\  dx=0,
\]
hence, in view of \eqref{coer.comv.ph2},
\[
\lim_n \int_{\R^\e_1} \det \nabla v_n\  dx=0.
\]
Subtracting twice that quantity from \eqref{coer3.1}, we obtain
\[
\lim_n\left\{\sum_{i,j=1,2} \int_{\R^\e_1} \left|\frac{\partial{v^n_i}}{\partial x_j}\right|^2\ dx\right\}=0,
\]
which, together with \eqref{coer.comv.ph2}, \eqref{coer.L2conv}, contradicts \eqref{coer1}.
\end{proof}
}

Thanks to  Lemma \ref{lem.coer}, we can now consider the setting of Theorem \ref{thm.mainbis}.
 Take {\em e.g.} $f\in L^2(\OR)$ and consider the minimizer $u^\e\in H^1_0(\OR)$ for the functional
\be{eq.fctle}
\displaystyle {\mathscr I}^\e(v)-2\int_\Omega f\cdot v\,dx.
\ee
That minimizer exists and is unique thanks to the coercivity property in Lemma \ref{lem.coer}, together with the fact that
substitution of $\LL(x/\e)$ by $\KK(x/\e)$ in the expression for ${\mathscr I}^\e$ imparts  convexity on the integrand and, even better, strict convexity in view of the presence of the zeroth order term in the expression for ${\mathscr I}^\e$. 

Remark that $u^\e$ is then the unique solution of the Euler-Lagrange equation associated with the minimization of ${\mathscr I}^\e$ over $H^1_0(\OR)$, that is
\be{eq.elee}
\begin{array}{cl}
-\div(\LL(x/\e) \nabla u^\e)+u^\e=f& \mbox{in }\Omega\\[2mm]
u^\e=0& \mbox{on }\partial \Omega.
\end{array}
\ee
The $\e$-indexed sequence $u_\e$ is  clearly bounded in $L^2(\OR)$ and, thanks to Theorem \ref{thm.mainbis}, we conclude in particular to the $L^2$-weak convergence of  this sequence of  minimizers to the (unique) minimizer $u^0$ in $H^1_0(\OR)$, or $\XX$, depending on the setting, of the $\Gamma$-limit 
\be{eq.fctl0}
\displaystyle {\mathscr I}^0(v)-2\int_\Omega f\cdot v\,dx.
\ee
{In cases (i) or (ii) with $\theta\ne 1/2$, it is  then immediate, through classical variations,  that $u^0$ is the unique $H^1_0(\OR)$-solution of the Euler-Lagrange equation associated with that functional, that is
\be{eq.eleq0not}
\begin{array}{cl}
-\div(\LL^0 \nabla u^0)+u^0=f& \mbox{in }\Omega\\[2mm]
u^0=0& \mbox{on }\partial \Omega.
\end{array}
\ee
In case (ii) with $\theta=1/2$, we need to appeal to the precise values of $\LL^0$ in \eqref{eq:values} and to perform the appropriate variations keeping in mind Remark \ref{cross-terms}.  We easily conclude that $u^0$ satisfies}
\be{eq.eleq0}
\begin{array}{cl}
-\div(\LL^0 \nabla u^0)+u^0=f& \mbox{in }\Omega
\\[2mm]
u^0_1=0, \; u^0_2\,\nu_1=0& \mbox{on }\partial \Omega.
\end{array}
\ee
{where $\LL^0 \nabla u^0$ is {\em a priori} a distribution since $u^0\in \XX$.}

We have thus proved the following

\begin{lemma}\label{lem.ow}
In the setting of Theorem \ref{thm.mainbis}, the unique minimizer for \eqref{eq.fctle} converges weakly in $L^2(\OR)$ to the unique minimizer for \eqref{eq.fctl0}
which further satisfies the Euler-Lagrange equation \eqref{eq.eleq0not}(or \eqref{eq.eleq0} in the Gutiérrez case).
\end{lemma}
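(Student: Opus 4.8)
The plan is to chain together the $\Gamma$-convergence result of Theorem~\ref{thm.mainbis}, the uniform coercivity of Lemma~\ref{lem.coer}, and the standard theorem that $\Gamma$-convergence plus equicoercivity yields convergence of minimizers. First I would observe that for fixed $\e$ the functional \eqref{eq.fctle} is strictly convex and coercive on $H^1_0(\OR)$: coercivity is exactly Lemma~\ref{lem.coer} (the linear term $-2\int f\cdot v$ is absorbed by Young's inequality against the $\alpha_\e\|v\|_{L^2}^2$ contribution), and strict convexity comes from rewriting ${\mathscr I}^\e$ with $\KK(x/\e)$ in place of $\LL(x/\e)$ via the null-Lagrangian identity \eqref{form.with.K}, together with the strictly convex zeroth-order term $\int|v|^2$. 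Hence the minimizer $u^\e$ exists, is unique, and solves the Euler--Lagrange system \eqref{eq.elee}.

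Next I would establish the $L^2$-weak precompactness of $\{u^\e\}$. Testing \eqref{eq.elee} against $u^\e$ and using Lemma~\ref{lem.coer} gives $\alpha_\e(\|u^\e\|_{L^2}^2+\|\nabla u^\e\|_{L^2}^2)\le {\mathscr I}^\e(u^\e)\le 2\int f\cdot u^\e\le \|f\|_{L^2}^2+\|u^\e\|_{L^2}^2$; the $\|u^\e\|_{L^2}^2$ term here comes from the exact zeroth-order term in ${\mathscr I}^\e$ itself, not from $\alpha_\e$, so one actually gets a clean bound $\|u^\e\|_{L^2}\le \|f\|_{L^2}$ after noting ${\mathscr I}^\e(u^\e)\ge \|u^\e\|_{L^2}^2$ by nonnegativity of the quadratic form $\KK$. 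Thus $\{u^\e\}$ is bounded in $L^2(\OR)$, and along a subsequence $u^\e\rightharpoonup u^0$ weakly in $L^2$. The fundamental theorem of $\Gamma$-convergence (see \emph{e.g.} \cite{dalmaso93}) applied to Theorem~\ref{thm.mainbis} — with the continuous perturbation $-2\int f\cdot v$, which is weakly $L^2$-continuous and hence harmless — identifies $u^0$ as a minimizer of the $\Gamma$-limit functional \eqref{eq.fctl0} (either ${\mathscr I}^0$ over $H^1_0(\OR)$ or ${\mathscr I}^{1/2}$ over $\XX$, according to the case). Since ${\mathscr I}^0$ and ${\mathscr I}^{1/2}$ are strictly convex on their respective Hilbert spaces (the quadratic form $\LL^0$ being nonnegative, as it inherits nonnegativity of $\KK$ through \eqref{eq.hom-for}, plus the strictly convex $\int|v|^2$; in the $\XX$ case one uses the replacement of cross terms from Remark~\ref{cross-terms}), the minimizer is unique, so the whole sequence converges, not merely a subsequence.

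Finally I would derive the Euler--Lagrange characterization. In cases (i) and (ii) with $\theta\ne1/2$ the minimizer lies in $H^1_0(\OR)$ and the functional is Fr\'echet-differentiable there, so setting the first variation in directions $\varphi\in C^\infty_c(\OR)$ to zero yields \eqref{eq.eleq0not} in the distributional sense, with $u^0\in H^1_0$ encoding the Dirichlet condition. In the Guti\'errez case $\theta=1/2$ one computes the first variation of ${\mathscr I}^{1/2}$ over $\XX$ using the explicit entries of $\LL^0$ in \eqref{eq:values} — crucially $\LL^0_{2222}=0$, so no $\partial u^0_2/\partial x_2$ term appears — and the cross-term convention of Remark~\ref{cross-terms}. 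Varying over $C^\infty_c(\OR)$ (dense in $\XX$ by Remark~\ref{structureX}) produces the bulk equation $-\div(\LL^0\nabla u^0)+u^0=f$ as a distributional identity, while the constraints built into $\XX$, namely $u^0_1\in H^1_0(\Om)$ and $u^0_2\,\nu_1=0$ on $\partial\Om$, give the boundary conditions in \eqref{eq.eleq0}.

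The main obstacle, and the only genuinely delicate point, is the Guti\'errez case: one must make sense of $\LL^0\nabla u^0$ as a distribution when $u^0\in\XX$ only, justify that the first-variation computation is legitimate despite $\partial u^0_2/\partial x_2$ possibly failing to be in $L^2$, and correctly read off that the natural (Neumann-type) boundary condition on the part of $\partial\Om$ where $\nu_1\ne0$ collapses — because of the degeneracy $\LL^0_{2222}=0$ — to the weak condition $u^0_2\,\nu_1=0$ rather than a full Dirichlet condition on $u^0_2$. Everything else is a routine assembly of the convexity, coercivity, and $\Gamma$-convergence facts already established.
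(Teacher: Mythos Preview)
Your proposal is correct and follows essentially the same route as the paper: existence and uniqueness of $u^\e$ via Lemma~\ref{lem.coer} and the $\KK$-rewriting for strict convexity, the $L^2$-bound on $u^\e$, then Theorem~\ref{thm.mainbis} plus the weak-$L^2$ continuity of the linear term to obtain convergence of minimizers, uniqueness of the limit, and finally the Euler--Lagrange equations by direct variation (using \eqref{eq:values} and Remark~\ref{cross-terms} in the Guti\'errez case). The only quibble is your closing description of $u^0_2\,\nu_1=0$ as a collapsed Neumann condition---it is simply the essential boundary condition already built into $\XX$, as you correctly state one paragraph earlier.
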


\begin{remark}\label{rem.aeps}
Note, for implicit use in the next and final subsection,  that all results (suitably modified) in this subsection 
remain true in the context of Corollary \ref{thms.cor}, that is if the term $\int_\Om |v|^2\ dx$ is replaced by
$\int_\Om a(x/\e)|v|^2\ dx.$
and the linear term $\int_\Om f\cdot v\ dx$ is replaced by $\int_\Om b(x/\e)f\cdot v\ dx$.
\hfill\P\end{remark}

\medskip

\subsection{Wave propagation in the  setting of Theorem \ref{thm.mainbis}}
We now consider a typical problem of elasto-dynamics at fixed $\e$. Consider $(f,g)\in H^1_0(\OR)\times L^2(\OR)$ and the following system for $u^\e(t), \; t\in [0,\infty),$
\be{eq:wee}
\begin{array}{cl}
\ds \rho(x/\e)\frac{\partial^2 u^\e}{\partial t^2}- \div (\LL(x/\e)\nabla u^\e)=0 & \mbox{ in } \Om\times [0,\infty)\\[2mm]
u^\e=0 &\mbox{ on } \partial\Om\times [0,\infty)\\[2mm]
\ds u^\e(0)=f,\quad \frac{\partial u^\e}{\partial t}(0)=g&\mbox{ in }\Om.
\end{array}
\ee
In \eqref{eq:wee}, $\rho(y)$ is the mass density, that is 
$$
\rho(y)=\rho_i  \mbox{\; in } \ZZ_i,\ i=1,2,\quad
 0<\rho_1,\rho_2.
$$
Then, in view of Lemma \ref{lem.coer} and Remark \ref{rem.aeps}, it is classical that this problem has a unique solution $\ds\left(u^\e, \frac{\partial u^\e}{\partial t}\right)\in
C^0([0,\infty); H^1_0(\OR)\times L^2(\OR))$.
Since 
\[
v\mapsto\int_\Om \LL(x/\e)\nabla v\cdot\nabla v \,dx\ge 0,\quad v\in H^1_0(\OR),
\]
we immediately deduce from energy conservation that 
\[
{\frac{\partial u^\e}{\partial t}\mbox{ is bounded in }L^\infty((0,\infty);L^2(\OR)).}
\]
For a subsequence (that we will not relabel), there exists $u^0$ such that
\be{eq.bdonue}
\begin{cases}u^\e \wkst u^0, &\mbox{weakly-* in } W^{1,\infty}_{\rm loc}([0,\infty);L^2(\OR))
\\[2mm]
\ds\frac{\partial u^\e}{\partial t}\wkst \frac{\partial u^0}{\partial t}, &\mbox{weakly-* in } L^{\infty}([0,\infty);L^2(\OR)).
\end{cases}
\ee
 Furthermore, the Laplace transform 
\[
\hat u^\e(\ell):=\int_0^\infty u^\e(t)e^{-\ell t}\ dt, \;\ell>0,
\]
of $u^\e$ satisfies
\[
\begin{array}{cl}
\ell^2 \rho(x/\e)\hat u^\e(\ell)- \div(\LL(x/\e)\nabla \hat u^\e(\ell))= \rho(x/\e)(\ell f+g) & \mbox{ in }\Om
\\[2mm]
\hat u^\e(\ell)=0&\mbox{ on } \partial\Om.
\end{array}
\]
Recalling \eqref{eq.elee}, we infer that $\hat u^\e(\ell)$ is the unique $H^1_0(\OR)$-mimimizer
of 
\[
\displaystyle\iO\big\{\LL(x/\e) \nabla v\cdot\nabla v+\ell^2 \rho(x/\e)|v|^2- 2\rho(x/\e)(\ell f+g)\cdot v\big\}\,dx.
\]
But then, applying Lemma \ref{lem.ow} (and Remark \ref{rem.aeps}), we conclude that, at least in the settings validated in  Theorem \ref{thm.mainbis} and Corollary \ref{thms.cor}, 
\[
\hat u^\e(\ell) \wk \check u^0(\ell), \mbox{ weakly in } L^2\OR),
\]
where $\check u^0(\ell)$ is the unique $H^1_0$ (or $\XX$ in case (ii))-minimizer of
\[
\displaystyle\iO\big\{\LL^0 \nabla v\cdot\nabla v+\ell^2\bar \rho |v|^2- 2\bar\rho\,(\ell f+g)\cdot v\big\}\,dx
\]
with 
\be{eq.barr}
\bar \rho:= \theta\rho_1+(1-\theta) \rho_2.
\ee

Now, in view of \eqref{eq.bdonue}, 
\be{eq.chha}
\check u^0(\ell)=\hat u^0(\ell),\; \ell>0.
\ee

In case (i) or case (ii) with $\theta\ne 1/2$, the system of elasto-dynamics 
\be{eq:we0}
\begin{array}{cl}
\ds\bar\rho\,\frac{\partial^2 \bar u^0}{\partial t^2}- \div (\LL^0\nabla \bar u^0)=0 & \mbox{ in } \Om\times [0,\infty)\\[2mm]
\bar u^0=0 &\mbox{ on } \partial\Om\times [0,\infty)\\[2mm]
\ds \bar u^0(0)=f,\quad \frac{\partial \bar u^0}{\partial t}(0)=g&\mbox{ in }\Om
\end{array}
\ee
has a unique solution $\ds \left(\bar u^0, \frac{\partial \bar u^0}{\partial t}\right)$ in $C^0([0,\infty); H^1_0(\OR)\times L^2(\OR))$, as can be easily checked by {\em e.g.} extending all functions to $0$ ouside $\Om$ and recalling Remark \ref{sse.l0} which implies coercivity of 
\[
\int_{\R^2}\left\{\LL^0 \nabla v\cdot \nabla v+ \bar\rho|v|^2\right\}dx
\]
over $H^1_0(\R^2;\R^2)$.

The system 
\be{eq:we0G}
\begin{array}{cl}
\ds\bar\rho\,\frac{\partial^2 \bar u^0}{\partial t^2}- \div (\LL^0\nabla \bar u^0)=0 & \mbox{ in } \Om\times [0,\infty)\\[2mm]
\bar u^0_1=0,\ \bar u^0_2\,\nu_1=0 &\mbox{ on } \partial\Om\times [0,\infty)\\[2mm]
\ds \bar u^0(0)=f,\quad \frac{\partial \bar u^0}{\partial t}(0)=g&\mbox{ in }\Om
\end{array}
\ee
\noindent
also possesses a unique solution in case (ii) with $\theta=1/2$, which is admittedly less classical.

The result can be obtained through various methods. For example, one can remark that the operator
\[
\begin{array}{c}
\ds \L^0:= \left(\begin{matrix}0&1
\\
\div(\LL^0\nabla \cdot) & 0\end{matrix}
\right)
\end{array}
\]
with domain 
$D(\L^0)\!:=\! \left\{(v,w)\in\XX\times L^2(\OR) :w\in \XX,\,\div (\LL^0\nabla v) \in L^2(\OR)\right\}$
is  skew self-adjoint on the Hilbert space $\XX\times L^2(\OR)$. 

{To that end, one should endow $\XX$ with the inner product
\[
\langle\!\langle u,v\rangle\!\rangle_\XX:= \int_\Om \LL^0\nabla u\cdot\nabla v\ dx.
\]
Because $\Om$ is bounded and $C^\infty_c(\OR)$ is dense in $\XX$, it is easily checked that this new inner product generates a norm which is equivalent to that, denoted here by $\|\cdot\|_\XX$, associated with the inner product $\langle\cdot,\cdot\rangle_\XX$ defined in Remark \ref{structureX}. Indeed, take $\ph\in C^\infty_c(\OR)$. Then, with the help of \eqref{eq:values},
\begin{multline*}
\ds\int_\Om \LL^0\nabla\ph\cdot\nabla\ph\,dx=
\int_{\R^2} \left\{(\bl+2\bm_1)\,\xi_1^2|\hat\ph_1|^2+2(\bl+\bm_2)\,\xi_1\xi_2\,\mbox{Re}(\hat\ph_1\bar{\hat\ph}_2)\right.
\\[2mm]
\left.+\;\bm_2\big(\xi_2^2|\hat\ph_1|^2+\xi_1^2|\hat\ph_2|^2\big)\right\}d\xi=
\\[2mm]
\ds\int_{\R^2} \left\{(\bl+2\bm_1)\,\xi_1^2|\hat\ph_1|^2+\bar L\begin{pmatrix}\xi_2\hat{\ph}_1 \\ \xi_1\hat{\ph}_2\end{pmatrix}\cdot
\overline{\begin{pmatrix}\xi_2\hat{\ph}_1 \\ \xi_1\hat{\ph}_2\end{pmatrix}}\,\right\}d\xi
\end{multline*}
where $\bar L:=\left(\begin{smallmatrix}\bm_2&\bl+\bm_2\\\bl+\bm_2&\bm_2\end{smallmatrix}\right).$
But, in view of \eqref{eq:values}, the quantities $\bl+2\bm_1$, $-\bl$, $\bm_2$, $\bl+2\bm_2$ are positive, so that the integrand in the right hand-side of the second equality above is bounded below by 
\begin{multline*}
c\! \int_{\R^2}\!\!\big(\xi_1^2|\hat\ph_1|^2+\xi_2^2|\hat\ph_1|^2+ \xi_1^2|\hat\ph_2|^2\big)d\xi
=\!c\!\left(\!\|\nabla \ph_1\|_{L^2(\OR)}^2\!+\!\left\|\frac{\partial \ph_2}{\partial x_1}\right\|_{L^2(\Om)}^2\!\right)\!\ge \!c'_\Om\|\ph\|^2_{\XX}
\end{multline*}
for some $c>0$ and some $\Om$-dependent $c'_\Om>0$. Note that the  inequality above holds true precisely because $\Om$ is bounded.
}

It then suffices to apply Stone's theorem for unitary groups of operators (see {\em e.g.} \cite[Chapter IX-9]{yosida}).

\medskip

Thus, in both cases (i) and (ii), the Laplace transform for any $\ell>0$ of the unique solution $\bar u^0$ to \eqref{eq:we0}   is precisely $\check u^0(\ell)$ which in turn, thanks to \eqref{eq.chha}, is the Laplace transform $\hat u^0(\ell)$ of $u^0$ given through convergences \eqref{eq.bdonue}. Thus $\bar u^0=u^0$ and, in view of the uniqueness of the limit function $u^0$, there is no need to extract subsequences.

We have proved that the following theorem holds true:
\begin{theorem}\label{thm.elastod}
In the setting of Theorem \ref{thm.mainbis} and given $$(f,g)\in H^1_0(\OR)\times L^2(\OR)$$ as initial conditions, the unique solution $$\ds\left(u^\e, \frac{\partial u^\e}{\partial t}\right)\in C^0([0,\infty); H^1_0(\OR)\times L^2(\OR))$$ to system \eqref{eq:wee} converges weakly-* in $$W^{1,\infty}_{\rm loc}((0,\infty);L^2(\OR))\times L^\infty((0,\infty);L^2(\OR))$$ to $$\left(u^0, \frac{\partial u^0}{\partial t}\right)\in C^0([0,\infty); H^1_0(\OR)(\mbox{resp. }\XX)\times L^2(\OR)),$$ the unique solution  to system \eqref{eq:we0} with $\bar \rho$ defined by \eqref{eq.barr} in cases (i) or (ii) with $\theta\ne 1/2$ (resp. case (ii) with $\theta=1/2$).
\end{theorem}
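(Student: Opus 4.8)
### Proof plan for Theorem \ref{thm.elastod}

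\textbf{Overall strategy.} The plan is to reduce the time-dependent statement to the elliptic/stationary results already established (Theorem \ref{thm.mainbis}, Corollary \ref{thms.cor}, Lemma \ref{lem.ow}) by passing through the Laplace transform in the time variable. The scheme is: (i) obtain uniform-in-$\e$ a priori bounds on $u^\e$ and $\partial_t u^\e$ from energy conservation, extract a weak-$*$ limit $u^0$; (ii) show that the Laplace transform $\hat u^\e(\ell)$ of the solution is, for each fixed $\ell>0$, the minimizer of a stationary functional of exactly the type treated in Corollary \ref{thms.cor} (with the zeroth-order coefficient $a(y)=\ell^2\rho(y)$ and the load coefficient $b(y)=\rho(y)$); (iii) apply Lemma \ref{lem.ow} (in the form given by Remark \ref{rem.aeps}) to identify the $L^2$-weak limit $\check u^0(\ell)$ of $\hat u^\e(\ell)$ as the minimizer of the corresponding homogenized functional, which in turn is the Laplace transform at $\ell$ of the solution $\bar u^0$ of the homogenized wave system \eqref{eq:we0} (resp. \eqref{eq:we0G}); (iv) match $\check u^0(\ell)=\hat u^0(\ell)$ via \eqref{eq.bdonue}, invoke injectivity of the Laplace transform to deduce $u^0=\bar u^0$, and conclude that no subsequence extraction is needed since the limit is uniquely characterized.

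\textbf{Well-posedness of the limit system.} Before the convergence argument can be completed one must know that \eqref{eq:we0} and, crucially, \eqref{eq:we0G} are well posed. For cases (i) and (ii) with $\theta\ne 1/2$ this is classical: $\LL^0$ is strictly strongly elliptic (Remark \ref{sse.l0}), so after extension by zero outside $\Om$ one has coercivity of $\int_{\R^2}\{\LL^0\nabla v\cdot\nabla v+\bar\rho|v|^2\}\,dx$ over $H^1_0(\R^2;\R^2)$ and standard semigroup theory applies. The delicate case is the Guti\'errez case $\theta=1/2$, where $\LL^0_{2222}=0$ and the natural energy space is $\XX$; there I would endow $\XX$ with the energy inner product $\langle\!\langle u,v\rangle\!\rangle_\XX:=\int_\Om \LL^0\nabla u\cdot\nabla v\,dx$, use the explicit values \eqref{eq:values} together with the Fourier-side computation showing this norm is equivalent to the graph norm $\|\cdot\|_\XX$ (the boundedness of $\Om$ being essential, via a Poincar\'e-type inequality on the first component), and then verify that the first-order operator $\L^0=\left(\begin{smallmatrix}0&1\\ \div(\LL^0\nabla\,\cdot)&0\end{smallmatrix}\right)$ with the natural domain is skew-self-adjoint on $\XX\times L^2(\OR)$, so that Stone's theorem yields a unitary group and hence existence, uniqueness, and energy conservation.

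\textbf{The Laplace transform step in detail.} Given the uniform bound $\|\partial_t u^\e\|_{L^\infty((0,\infty);L^2)}\le C$ and, from energy conservation and nonnegativity of $\int_\Om\LL(x/\e)\nabla v\cdot\nabla v\,dx$, a uniform bound on $u^\e$ growing at most linearly in $t$, the Laplace transforms $\hat u^\e(\ell)$ are well defined for $\ell>0$ and bounded in $L^2(\OR)$ uniformly in $\e$. Taking the Laplace transform of the PDE in \eqref{eq:wee} turns the second time-derivative into multiplication by $\ell^2$ and produces the source $\rho(x/\e)(\ell f+g)$, so $\hat u^\e(\ell)$ solves the stationary Euler--Lagrange equation associated with minimizing $\iO\{\LL(x/\e)\nabla v\cdot\nabla v+\ell^2\rho(x/\e)|v|^2-2\rho(x/\e)(\ell f+g)\cdot v\}\,dx$ over $H^1_0(\OR)$; this is precisely ${\mathscr G}^\e$ of Corollary \ref{thms.cor} with $a(y)=\ell^2\rho(y)\ge\ell^2\min(\rho_1,\rho_2)>0$ and $b(y)=\rho(y)$, $f$ replaced by $\ell f+g$. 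Lemma \ref{lem.ow} together with Remark \ref{rem.aeps} then gives $\hat u^\e(\ell)\rightharpoonup\check u^0(\ell)$ weakly in $L^2(\OR)$, where $\check u^0(\ell)$ minimizes the homogenized functional with coefficients $\bar a=\ell^2\bar\rho$, $\bar b=\bar\rho$; by the well-posedness just established, $\check u^0(\ell)$ is exactly $\widehat{\bar u^0}(\ell)$. On the other hand, passing to the limit directly in the definition of the Laplace transform using \eqref{eq.bdonue} gives $\check u^0(\ell)=\hat u^0(\ell)$, so $\hat u^0(\ell)=\widehat{\bar u^0}(\ell)$ for all $\ell>0$; injectivity of the Laplace transform forces $u^0=\bar u^0$. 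Since the limit is thereby uniquely identified, the whole sequence (not just a subsequence) converges, which is the assertion of the theorem.

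\textbf{Anticipated main obstacle.} The routine part is the a priori energy estimate and the Laplace-transform bookkeeping. The genuinely delicate point is the well-posedness of the homogenized wave equation \eqref{eq:we0G} in the Guti\'errez case, i.e. verifying that $\L^0$ is skew-self-adjoint on $\XX\times L^2(\OR)$ — in particular identifying the domain correctly and proving the equivalence of the energy norm with the $\XX$-norm, which hinges on a Poincar\'e-type control of $u_1$ (vanishing trace) and of $\partial_{x_1}u_2$ over the bounded domain $\Om$; without boundedness of $\Om$ this fails, and the degenerate direction $x_2$ in the second component must be handled with care. A secondary technical check is the justification of interchanging the two-scale/weak limit with the (improper) time integral defining the Laplace transform, which follows from the uniform $L^\infty_t(L^2_x)$ bound on $\partial_t u^\e$ and dominated convergence but should be stated cleanly.
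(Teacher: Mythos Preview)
Your proposal is correct and follows essentially the same approach as the paper: uniform energy bounds yield weak-$*$ compactness, the Laplace transform in time reduces the problem to the stationary minimization covered by Corollary~\ref{thms.cor} via Lemma~\ref{lem.ow} and Remark~\ref{rem.aeps}, and injectivity of the Laplace transform identifies the limit; well-posedness of the homogenized system in the Guti\'errez case is handled exactly as you outline, by showing the energy inner product on $\XX$ is equivalent to $\|\cdot\|_\XX$ (using \eqref{eq:values} and a Fourier/Plancherel computation on $C^\infty_c$-functions, with boundedness of $\Om$ essential) and then invoking Stone's theorem for the skew-self-adjoint operator $\L^0$. You have also correctly flagged the genuinely delicate point.
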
 

\vskip.5cm

We conclude this study in the company of Gutiérrez. In that case, we know that $\LL^0_{2222}=0$.
\par
As far as the two-phase laminate at fixed $\ep$ is concerned, the elasto-dynamic problem \eqref{eq:wee} cannot have a non-zero plane wave solution of the type
\[
u^\e(t,x)=F^\e(t+\xi^\e\!\cdot x)+G^\e(t-\xi^\e\!\cdot x)\quad\mbox{for some }\xi^\e\in\R^2\setminus\{0\},
\]
because of  the Dirichlet boundary condition satisfied by $u^\e(t,\cdot)$ on $\partial\Om$, and this whatever the initial conditions $f$ and $g$ might be.  The same applies to  the elasto-dynamic problem \eqref{eq:we0} associated with the homogenized material in  cases (i) and (ii) with $\theta\neq1/2$.     
\par
On the contrary, in the Guti\'errez setting (ii) with $\theta=1/2$, starting {\em e.g.} with the initial conditions
\be{square.ic}
f(x)=2\sin x_1,\ g(x)=0\quad\mbox{for }x\in\Om=(0,\pi)^2,
\ee
it is easy to check that the function $u^0=(0,u^0_2)\in C^0([0,\infty);\XX)$, with
\be{square.sol}
u^0_2(t,x_1)=\sin\Big(\textstyle{\sqrt{\frac{\LL^0_{1212}}{\bar\rho}}}\,t+x_1\Big)-\sin\Big(\textstyle{\sqrt{\frac{\LL^0_{1212}}{\bar\rho}}}\,t-x_1\Big)\quad\mbox{for } t\in[0,\infty),\ x_1\in(0,\pi)
\ee
(where $\LL^0_{1212}=2\mu_1\mu_2/(\mu_1+\mu_2)$), is a transverse plane wave solution to the homogenized problem \eqref{eq:we0}. This is so because  the space $\XX$ replaces $H^1_0(\Om;\R^2)$ and thus does not require any boundary condition for $u_2$ on the horizontal sides of the  square.

Forgetting now about boundary conditions, we would like to investigate the kind of plane waves that the Gutiérrez material can withstand on the whole plane. To this effect, we find it more convenient to write a (2D orthorombic) stress-strain relation for that material in the form
\[
\begin{cases}
\sigma_{11}=\bl\,\div u + 2\bm_1\,\esym_{11}(u)\\[2mm]
\sigma_{12}=2\bm_2\,\esym_{12}(u)\\[2mm]
\sigma_{22}=\bl\,\div u +2\bm_3\,\esym_{22}(u),
\end{cases}
\]
where $\bl$, $\bm_1$, $\bm_2$ have been defined in \eqref{eq:values} and $\bm_3:= \mu_1$,
so that, in particular, $\bl+2\bm_3=0$, while $\bl+2\bm_1,\bl+2\bm_2>0$ as well as the $\bm_i$'s for $i=1,2,3$. 
We seek a plane wave solution of the form
\[
u(t,x)= e^{i(k\cdot x-\omega t)}\,\eta,\quad\eta, k\in \R^2,\ |k|=1,\ \omega\in\R,
\]
of equation \eqref{eq:we0}.
After some algebra, this amounts to finding the eigenvalues, {\it i.e.,} the $(\bar\rho\,\omega^2)$'s,  of the
symmetric matrix
\[
A(k):=\left(\begin{matrix}(\bl+2\bm_1)k_1^2+\bm_2 k^2_2&(\bl+\bm_2)k_1k_2
\\[2mm]
(\bl+\bm_2)k_1k_2&(\bl+2\bm_3)k_2^2+\bm_2 k^2_1\end{matrix}\right),
\]
in which case the corresponding eigenvectors are the directions of propagation, {\it i.e.,} the $\eta$'s.
In our setting, and even in the case where $\bl+2\bm_3>0$ (which could be obtained by increasing the value of $\bl$),  it is an easy task to check that, provided that 
\[
(\bl+2\bm_1)(\bl+2\bm_3)-\bl(\bl+2\bm_2)\ge 0,
\]
 the two eigenvalues of $A(k)$
are always nonnegative. Further,  the case 
\be{eq.comp0}\bl+2\bm_3=0,
\ee 
which is directly in the spirit of the Cauchy material (see \eqref{zero.ws}),  is the only case for which one of the eigenvalues can be zero.  This happens for, and only for $k_1=0$. There, the plane wave is a transversal (shear) wave oscillating in  the direction $e_2$ and propagating in the direction $e_1$.

So, in essence, the singular behavior of the Gutiérrez material resides both in the possibility of propagating plane waves on a bounded domain with Dirichlet boundary conditions -- as demonstrated through \eqref{square.ic}, \eqref{square.sol} -- and in the existence  of one, and only one direction in which longitudinal waves cannot propagate, namely the direction of lamination.

It is somewhat tempting to view the Gutiérrez material as a contemporary version of {labile} aether, that is of an elastic  material that does not support longitudinal waves as demonstrated by \eqref{eq.comp0}. See \cite[Chapter 5]{whittaker} for a description of Cauchy's,  Green's, and Thomson's attempts in this direction. However, our material merely seems to represent its putative manifestation in a 2D orthorombic crystal because it only prevents the existence of longitudinal waves in one direction.  

But ultimately the main difference is  that  aether is of course three-dimensional, a setting for which a similar analysis is wanting at present.  Gutiérrez has also produced in \cite{gutierrez99}, through multiple layering, a 3D material that  loses strict strong ellipticity. It is our unsubstantiated hope that the present analysis can be extended to that case as well.

\section*{acknowledgements}
{\small G.F. acknowledges the support of the  the National Science Fundation Grant DMS-1615839.
The authors also thank Giovanni Leoni for his help in establishing Remark \ref{structureX}, Patrick Gérard for fruitful insights into propagation in the absence of coercivity and Lev Truskinovsky for introducing us to the fascinating history of the elastic aether.}

\bibliographystyle{plain}
\bibliography{gaf.bib}

\end{document}